\newtheorem{thm}{Theorem}[section]
\newtheorem{cor}[thm]{Corollary}
\newtheorem{lemma}[thm]{Lemma}
\numberwithin{equation}{section}
\newcommand{\dmax}{d_{\mathrm{max}}}
\newcommand{\smax}{s_{\mathrm{max}}}
\newcommand{\tmax}{t_{\mathrm{max}}}
\newcommand{\xmax}{x_{\mathrm{max}}}
\newcommand{\ymax}{y_{\mathrm{max}}}
\newcommand{\deltamax}{\delta_{\mathrm{max}}}
\let\le=\leqslant
\let\ge=\geqslant
\let\leq=\leqslant
\let\geq=\geqslant
\let\originalleft\left
\let\originalright\right
\renewcommand{\left}{\mathopen{}\mathclose\bgroup\originalleft}
\renewcommand{\right}{\aftergroup\egroup\originalright}
\newcommand{\abs}[1]{\lvert#1\rvert} \let\card=\abs
\newcommand{\Abs}[1]{\bigl\lvert#1\bigr\rvert} 
\renewcommand{\dfrac}[2]{\lower0.12ex\hbox{\large$\textstyle\frac{#1}{#2}$}}
\newcommand{\Dfrac}[2]{\raise0.05ex\hbox{\small$\displaystyle\frac{#1}{#2}$}}
\newcommand{\eps}{\varepsilon}
\newcommand{\calB}{\mathcal{B}}
\newcommand{\calT}{\mathcal{T}}
\newcommand{\dvec}{\boldsymbol{d}}
\newcommand{\deltavec}{\boldsymbol{\delta}}
\newcommand{\svec}{\boldsymbol{s}}
\newcommand{\tvec}{\boldsymbol{t}}
\newcommand{\jvec}{\boldsymbol{j}}
\newcommand{\xvec}{\boldsymbol{x}}
\newcommand{\yvec}{\boldsymbol{y}}
\newcommand{\wvec}{\boldsymbol{w}}
\newcommand{\zvec}{\boldsymbol{z}}
\newcommand{\X}{\boldsymbol{X}}
\newcommand{\tc}[2]{#1{\cdot}#2}
\newcommand{\Reals}{\mathbb{R}}
\newcommand{\E}{\operatorname{\mathbb{E}}}
\newcommand{\EO}{\operatorname{EO}}
\newcommand{\Var}{\operatorname{Var}}
\def\nicebreak{\vskip 0pt plus 50pt\penalty-300\vskip 0pt plus -50pt }
\begin{document}

\title{Asymptotic enumeration of constrained bipartite,
directed and oriented graphs by degree sequence}
\author{
Catherine Greenhill\thanks{Supported by Australian Research Council grant DP250101611.}\\
\small School of Mathematics and Statistics\\[-0.8ex]
\small UNSW Sydney\\[-0.8ex]
\small NSW 2052, Australia\\
\small \tt c.greenhill@unsw.edu.au
\and
Mahdieh Hasheminezhad\thanks{Supported by a MATRIX-Simons travel grant.}\\
\small Department of Computer Science\\[-0.5ex]
\small  Yazd  University\\ [-0.5ex]
\small  Yazd, Iran\\[-0.5ex]
\small\tt hasheminezhad@yazd.ac.ir
\and
Isaiah Iliffe\\
\small School of Mathematics and Statistics\\[-0.8ex]
\small UNSW Sydney\\[-0.8ex]
\small NSW 2052, Australia\\
\small \tt isaiahiliffe@gmail.com
\and
Brendan D. McKay\footnotemark[1] \\
\small School of Computing \\[-0.5ex]
\small Australian National University  \\[-0.5ex]
\small Canberra ACT 2601, Australia \\[-0.5ex]
\small\tt brendan.mckay@anu.edu.au
}

\date{}

\maketitle
\begin{abstract}
In the sufficiently sparse case, we find the probability that 
a uniformly random bipartite graph with given degree sequence
contains no edge from a specified set of edges.
This enables us to enumerate loop-free digraphs and
oriented graphs with given in-degree and out-degree
sequences, and obtain subgraph probabilities.
Our theorems are not restricted to the near-regular case.
As an application, we determine the expected permanent of sparse or
very dense random matrices with given row and column sums;
 in the regular case, our formula holds over all densities.
We also draw conclusions about the degrees  of a random
orientation  of a random undirected graph with given
degrees, including its number of Eulerian orientations.
\end{abstract}

\section{Introduction}

A graph is bipartite if we can partition its vertex set into two disjoint nonempty sets,
say $U$ and $V$, such that all edges contain a vertex from $U$ and a vertex from $V$.
All graphs in this paper are finite. We will focus on bipartite graphs with a given
vertex bipartition $U\cup V$, say $U = \{u_1,\ldots,u_m\}$
and $V = \{ v_1,\ldots, v_n\}$. Given a pair of vectors $(\svec,\tvec)$ of nonnegative
integers, $\svec = (s_1,\ldots, s_m)$, $\tvec = (t_1, \ldots, t_n)$, we say that $(\svec,\tvec)$
is the degree sequence of a given bipartite graph on $U\cup V$ if, for all $i,j$, $u_i$ has degree $s_i$ and $v_j$ has degree $t_j$.
Our first goal is to present
an asymptotic formula for the number of bipartite graphs with given degree sequence which
avoid all edges of a  specified graph $X$, under certain sparseness conditions on the degree sequence and on $X$.
This result is Theorem~\ref{thm:bip-avoid}.

An ordered pair $G=(V,E)$ is a \textit{directed graph} (or \textit{digraph}) if $V$ is a finite and nonempty set and $E$ is a subset of $V \times V$. 
The members of $V$ are called \textit{vertices} of $G$ and the members of $E$ are called \textit{edges} of $G$.  An edge $(v,u) \in E$ is an \textit{outgoing edge} from vertex $v$ and an \textit{incoming edge} to vertex $u$. 
The number of outgoing edges from a vertex $v \in V$ is called the \textit{out-degree} of $v$ and the number of incoming edges to a vertex $v \in V$ is called the \textit{in-degree} of~$v$.
Let $G$ be a directed graph on the vertex set $W = \{ w_1,\ldots, w_n\}$ with out-degree sequence $\svec$ and in-degree sequence $\tvec$. We will say that the directed graph $G$ has degree sequence $(\svec,\tvec)$. Since a loop-free directed graph
can be modelled as a bipartite graph which contains no
edge of a specified perfect matching, 
we obtain from our
first result an asymptotic enumeration formula for loop-free directed graphs with given out-degree and in-degree sequences, again under a sparsity condition.  See Corollary~\ref{loopfreethm}. Another application of our first
result provides a formula for the expected permanent of sparse random matrices with given row and column sums, see Theorem~\ref{permanent}. With some more work we find a formula for the expected permanent which holds over all densities when all row and column sums are equal. This result is Theorem~\ref{regulardet}.

An \textit{oriented graph} is a digraph which contains neither loops nor directed 2-cycles. Every oriented graph can be obtained from a simple undirected graph by orienting its edges. We obtain a formula for the number of oriented graphs with given degrees in Corollary~\ref{2cycans}, under a sparsity condition. Finally, in Theorem~\ref{orients} we give an asymptotic formula for the expected number of ways to orient a random undirected graph with a given (sparse) degree sequence, such that the resulting orientations have specified in-degrees and out-degrees.  In particular, this gives the expected number of Eulerian orientations of a random graph with given (sparse) degrees, when all of these degrees are even. 

Results on bipartite graphs are presented in Section~\ref{s:bipartite}, then we consider digraphs in Section~\ref{s:digraphs} and permanents in Section~\ref{s:permanents}. Finally in Section~\ref{s:oriented} we consider oriented  graphs.

Before proceeding we make a couple of quick remarks about notation.
For a positive integer $a$, let $[a]:=\{1,2,\ldots, a\}$. We write $(x)_b=x(x-1)\cdots(x-b+1)$ for the falling factorial, where $x$ is a real number and $b$ is a nonnegative integer.  We will identify bipartite graphs and directed graphs with their edge sets.

\section{Bipartite graphs}\label{s:bipartite}

We consider bipartite graphs with vertex bipartition $U\cup V$, where $U=\{ u_1,\ldots, u_m\}$ and $V= \{ v_1,\ldots, v_n\}$.
Let $\mathcal{B}(\svec,\tvec)$ denote the set of 
simple bipartite graphs with degree sequence $(\svec,\tvec)$,
where
$\svec=(s_1,\ldots,s_m)$ and $\tvec=(t_1,\ldots,t_n)$.
That is, vertex $u_j$ has degree $s_j$ for all $j\in [m]$ and
vertex $v_j$ has degree $t_j$ for all $j\in [n]$.
Write $B(\svec,\tvec)=|\mathcal{B}(\svec,\tvec)|$.

Define, for all nonnegative integers $b$,
\begin{gather*}
   \smax = \max_{i\in [m]} s_i, \quad
   \tmax = \max_{j\in [n]} t_j, \quad
   \quad 
    S = \sum_{i\in [m]} s_i = \sum_{j\in [n]} t_j; \\
   S_b=\sum_{i\in [m]} (s_i)_b,\quad
     T_b=\sum_{j\in [n]} (t_j)_b.
\end{gather*}
Elementary bounds apply, such as $S_2\le\smax S$ 
and $T_2\le\tmax S$.

In this section we will count bipartite graphs with a given degree sequence
which contain no edge of a specified bipartite graph. Our starting point is the following result from~\cite{GMW}.

\begin{thm}[{\cite[Theorem\ 1.3]{GMW}}]\label{GMW}
If $S\to\infty$ and $\smax\tmax=o(S^{2/3})$,
then the number of bipartite graphs with degrees
$\svec,\tvec$ is
\[
B(\svec,\tvec) = \frac{S!}
 {\prod_{i\in [m]} s_i!\;\prod_{j\in [n]} t_j!}
\exp\biggl( Q(\svec,\tvec)
+ O\biggl(\frac{\smax^3\tmax^3}{S^2}\biggr)\biggr),
\]
where
\[
  Q(\svec,\tvec) = -\frac{S_2T_2}{2S^2} - \frac{S_2T_2}{2S^3} 
+ \frac{S_3T_3}{3S^3} 
 - \frac{S_2T_2(S_2+T_2)}{4S^4} 
 - \frac{S_2^2T_3+S_3T_2^2}{2S^4}
+ \frac{S_2^2T_2^2}{2S^5} .
\]
\end{thm}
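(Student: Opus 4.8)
The plan is to pass to the configuration (pairing) model and then estimate the probability that a random pairing is simple. Give each $u_i$ a set of $s_i$ distinct points and each $v_j$ a set of $t_j$ distinct points, and call a bijection between the $S$ points on the $U$-side and the $S$ points on the $V$-side a \emph{pairing}; there are exactly $S!$ of these. Every pairing projects to a bipartite multigraph with degree sequence $(\svec,\tvec)$, and a simple graph with these degrees is the image of exactly $\prod_{i\in[m]}s_i!\,\prod_{j\in[n]}t_j!$ pairings, obtained by permuting the points within each vertex. Consequently
\[
 B(\svec,\tvec)=\frac{S!}{\prod_{i\in[m]}s_i!\,\prod_{j\in[n]}t_j!}\,\Pr\bigl[\text{a uniformly random pairing is simple}\bigr],
\]
and the entire problem reduces to showing that this probability equals $\exp\bigl(Q(\svec,\tvec)+O(\smax^3\tmax^3/S^2)\bigr)$.

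Let $D$ count the unordered pairs of parallel edges of a random pairing, so that the pairing is simple exactly when $D=0$. A direct count fixes the leading behaviour: a parallel pair at $(u_i,v_j)$ chooses two points of $u_i$, two of $v_j$, and a matching between them, so
\[
 \E D=\sum_{i\in[m]}\sum_{j\in[n]}\binom{s_i}{2}\binom{t_j}{2}\frac{2}{(S)_2}=\frac{S_2T_2}{2(S)_2},
\]
whose expansion $\tfrac{S_2T_2}{2S^2}+\tfrac{S_2T_2}{2S^3}+\cdots$ already accounts for the first two terms of $Q$ under the heuristic $\Pr[D=0]\approx e^{-\E D}$. To make this rigorous I would use the switching method. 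Let $C_k$ be the number of pairings with exactly $k$ repeated edges, and consider the switching that takes one doubled pair at $(u_i,v_j)$ together with an unrelated edge $u_a v_b$ and re-pairs the four endpoints so as to destroy exactly one repeated edge. Counting the forward operations applicable to a pairing in $C_k$ (about $k$ times the $\Theta(S)$ choices of second edge) against the reverse operations producing it from $C_{k-1}$ gives $C_{k-1}/C_k\approx k/\E D$, so that $C_k\approx C_0(\E D)^k/k!$ and $\Pr[D=0]=C_0/\sum_k C_k\approx e^{-\E D}$.

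Upgrading this to the full polynomial $Q$ is the real work and the main obstacle. Every switching ratio must be evaluated with its lower-order corrections, keeping the exact falling factorials $(s_i)_b$ and $(S)_b$ rather than their leading powers, and the same must be done for the reverse counts; this is where all surviving contributions of orders $S^{-3}$, $S^{-4}$ and $S^{-5}$ are generated. Triple edges, which the doubled-pair switching does not remove directly, must be enumerated by a separate switching and contribute the $+S_3T_3/(3S^3)$ term, while the correlations between repeated edges sitting at two distinct vertex-pairs produce the $-S_2T_2(S_2+T_2)/(4S^4)$, $-(S_2^2T_3+S_3T_2^2)/(2S^4)$ and $S_2^2T_2^2/(2S^5)$ terms. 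Throughout, each estimate must be proved uniformly over degree sequences, with the many configurations in which two defects nearly coincide shown to contribute only $O(\smax^3\tmax^3/S^2)$; because $\smax\tmax=o(S^{2/3})$ this error is $o(1)$ and is exactly the threshold below which terms may be swept away. The crux of the argument is this bookkeeping: retaining all surviving contributions with their correct coefficients while proving the discarded ones negligible.

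A fallback I would keep in reserve is the complex-analytic route. Here one writes $B(\svec,\tvec)$ as the coefficient of $\prod_i x_i^{s_i}\prod_j y_j^{t_j}$ in $\prod_{i,j}(1+x_iy_j)$, represents this as a Cauchy integral over circles $x_i=r_i e^{\sqrt{-1}\,\theta_i}$ and $y_j=\rho_j e^{\sqrt{-1}\,\phi_j}$ with radii solving the saddle-point equations, and evaluates by the multidimensional saddle-point method. The analogous hard step is then to bound the integrand away from the origin in the phase variables and to Taylor-expand the logarithm of the integrand far enough to recover all six terms of $Q$. In the sparse regime considered here the switching method should give cleaner uniform error control, so I would attempt it first.
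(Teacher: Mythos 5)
This statement is not proved in the paper at all: it is imported verbatim as Theorem~1.3 of \cite{GMW}, so there is no internal proof to compare against. That said, your strategy --- pass to the pairing model, reduce the count to $S!/\bigl(\prod_i s_i!\prod_j t_j!\bigr)$ times the probability that a random pairing is simple, and estimate that probability by switchings indexed by the number of repeated edges --- is essentially the strategy of the cited source, and the parts you do carry out are correct: the pairing-model identity, the computation $\E D=S_2T_2/(2(S)_2)$, and the identification of the first two terms of $Q(\svec,\tvec)$ with the expansion of $-\E D$.

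The genuine gap is that everything beyond this leading behaviour is announced rather than derived, and that remainder is the entire content of the theorem at the stated precision. The terms $S_3T_3/(3S^3)$, $-S_2T_2(S_2+T_2)/(4S^4)$, $-(S_2^2T_3+S_3T_2^2)/(2S^4)$ and $S_2^2T_2^2/(2S^5)$, together with the uniform error bound $O(\smax^3\tmax^3/S^2)$, all have to be extracted from second-order expansions of the forward and reverse switching counts, from a separate treatment of triple edges, and from the correlations between defects at distinct vertex pairs; your text defers all of this to ``bookkeeping'' without exhibiting a single such expansion. Concretely, a complete argument needs (i) a truncation step showing that pairings with many repeated edges are negligible (the analogue of Lemma~\ref{initial-bipartite} in this paper), (ii) switching ratios computed with explicit error terms accurate to relative order $\smax^3\tmax^3/S^2$, and (iii) a summation device such as Lemma~\ref{summation-lemma} to convert those ratios into the exponential with controlled error. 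None of these is supplied, so the proposal is a correct roadmap but not a proof.
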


\bigskip

Let $X\subseteq U\times V$ specify (the edge set of) a
bipartite graph on $U\cup V$, and let $B(\svec,\tvec,X)$ be
the number of graphs in $\mathcal{B}(\svec,\tvec)$ which contain no edge of the graph $X$.
Define the parameter
\[ F = F(X):= \sum_{u_iv_j\in X} s_it_j\]
and let $(\xvec,\yvec)$ be the degree sequence of $X$. 
That is, vertex $u_i$ is contained in exactly $x_i$ edges of $X$ for all $i\in [m]$, and vertex $v_j$ is contained 
in exactly $y_j$ edges of $X$ for all $j\in [n]$.  Finally, let
\[  \xmax = \max_{j\in [m]} x_j,\quad \ymax = \max_{j\in [n]} y_j, \quad \deltamax =\smax\tmax+\smax\ymax+\xmax\tmax.\]

McKay~\cite[Theorem~4.6]{silver} gave an asymptotic formula for $B(\svec,\tvec,X)$ which is precise when $O\bigl( (\smax + \tmax)(\smax + \tmax + \xmax + \ymax)\bigr)= o(S^{1/2})$.  For very dense degrees, Greenhill and McKay~\cite[Theorem~2.1]{GMbip} provided an asymptotic enumeration formula for $B(\svec,\tvec,X)$ which allows $|X|$ to be slightly superlinear in $n$.
Liebenau and Wormald~\cite{LW2022} gave a formula for $B(\svec,\tvec)$ which holds for near-regular degree sequences of a large range of densities.

\bigskip

We now state the main result of this section, which extends McKay~\cite[Theorem~4.6]{silver}.

\begin{thm}
\label{thm:bip-avoid}
Let $X\subseteq U\times V$ be a specified bipartite graph on $U\cup V$.
Suppose that $\smax +\tmax = o(S/\log S)$, $\deltamax = o(S)$,
$\deltamax F = o(S^2)$ and $F =o(S^{5/3})$. 
Then
\begin{align*}
& B(\svec,\tvec,X) = B(\svec,\tvec)\, 
\exp\biggl( -\frac{F}{S} - \frac{3F^2}{2S^3}
+ O \biggl( \frac{\deltamax F}{S^2}
+ \frac{F^3}{S^5}\biggr) \biggr).
\end{align*}
\end{thm}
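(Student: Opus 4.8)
The plan is to use the probabilistic reformulation
\[
\frac{B(\svec,\tvec,X)}{B(\svec,\tvec)} = \Pr\bigl(G\cap X=\emptyset\bigr),
\]
where $G$ is uniform in $\calB(\svec,\tvec)$, and to evaluate this by inclusion--exclusion over the edges of $X$. If $Z$ denotes the number of edges of $X$ present in $G$, then this is $\Pr(Z=0)$, and since $\E Z\approx F/S$ with $Z$ nearly Poisson one expects a leading factor $\exp(-F/S)$. Writing $N(Y)$ for the number of graphs in $\calB(\svec,\tvec)$ containing a given $Y\subseteq X$, we have $B(\svec,\tvec,X)=\sum_{Y\subseteq X}(-1)^{\abs Y}N(Y)$, so everything reduces to estimating $\mu(Y):=N(Y)/B(\svec,\tvec)=\Pr(Y\subseteq G)$. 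Deleting the edges of $Y$ gives a bijection with graphs of reduced degree sequence $(\svec-\boldsymbol p_Y,\tvec-\boldsymbol q_Y)$ that avoid $Y$, where $\boldsymbol p_Y=(p_1,\dots,p_m)$ and $\boldsymbol q_Y=(q_1,\dots,q_n)$ are the degrees of $Y$; hence $N(Y)=B(\svec-\boldsymbol p_Y,\tvec-\boldsymbol q_Y,Y)$.

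The next step is to estimate $\mu(Y)$ with Theorem~\ref{GMW}. Dropping the avoidance condition for the moment, applying Theorem~\ref{GMW} to numerator and denominator yields
\[
\frac{B(\svec-\boldsymbol p_Y,\tvec-\boldsymbol q_Y)}{B(\svec,\tvec)}
= \frac{\prod_i (s_i)_{p_i}\,\prod_j (t_j)_{q_j}}{(S)_{\abs Y}}\,
\exp\bigl(Q(\svec-\boldsymbol p_Y,\tvec-\boldsymbol q_Y)-Q(\svec,\tvec)+O(\cdots)\bigr),
\]
the first factor being the ratio of multinomial main terms. To leading order this is $\prod_{u_iv_j\in Y}(s_it_j/S)$, so the single-edge terms sum to $\sum_{u_iv_j\in X}s_it_j/S=F/S$, and after the alternating sum these exponentiate to $\exp(-F/S)$. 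The quadratic term $-3F^2/(2S^3)$ must then be extracted from the second-order data: the deviation of $(S)_{\abs Y}$ from $S^{\abs Y}$ and of $(s_i)_{p_i}$ from $s_i^{p_i}$ (the falling-factorial corrections), the quadratic part of $Q(\svec-\boldsymbol p_Y,\tvec-\boldsymbol q_Y)-Q(\svec,\tvec)$, and the first-order effect of the avoidance factor.

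Carrying this out, I would organise $\mu(Y)$ as a product over the edges of $Y$ times a factor capturing pairwise interactions, substitute into the alternating sum, expand the logarithm, and show that the pairwise contributions assemble into $-3F^2/(2S^3)$ while everything else falls into the error term. The elementary bounds $S_2\le\smax S$, $T_2\le\tmax S$ and the relation $F/S\le\xmax\tmax\le\deltamax$, together with the hypotheses $\deltamax F=o(S^2)$ and $F=o(S^{5/3})$, are exactly what is needed to dominate these remainders and to control the tail of the sum over large $Y$: three-edge and higher correlations have relative size $O(1/S^2)$ and hence contribute only at the $F^3/S^5$ level.

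The hard part will be twofold. First, the identity $N(Y)=B(\svec-\boldsymbol p_Y,\tvec-\boldsymbol q_Y,Y)$ is self-referential, since the avoidance factor is itself an instance of the quantity being estimated; the estimates must be arranged so that this recursion closes and the errors do not accumulate over the exponentially many subsets $Y$. Second, and more delicate, is the precise second-order bookkeeping behind the coefficient $-\tfrac32$: the falling factorials, the quadratic part of $Q$, and the avoidance correction each contribute at order $F^2/S^3$, and the coefficient emerges only once they are combined, so this part of the calculation must be performed uniformly and sharply enough that the competing contributions can be added exactly rather than merely bounded against the error term.
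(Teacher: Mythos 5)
Your approach (inclusion--exclusion over subsets $Y\subseteq X$, with $\Pr(Y\subseteq G)$ estimated by applying Theorem~\ref{GMW} to the reduced degree sequence) is genuinely different from the paper's, which never invokes Theorem~\ref{GMW} at all: the paper partitions $\calB(\svec,\tvec)$ into classes $\calB_f$ by the number $f$ of $X$-edges present, shows via a crude switching (Lemma~\ref{initial-bipartite}) that all but an $O(S^{-2})$ fraction of graphs lie in $\calB_0\cup\dots\cup\calB_{N_0}$ with $N_0=\lceil\max\{\log S,42F/S\}\rceil$, estimates each ratio $|\calB_f|/|\calB_{f-1}|$ by a second switching (Lemma~\ref{forbidden-ratio-bipartite}), and sums the resulting recurrence with the technical Lemma~\ref{summation-lemma}. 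Unfortunately your route has two concrete gaps that I do not think can be repaired within your framework.

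First, the hypotheses of Theorem~\ref{thm:bip-avoid} permit $F/S\to\infty$ (indeed $F=o(S^{5/3})$ and $\deltamax F=o(S^2)$ allow $F/S$ as large as a power of $S$), so the number $Z$ of $X$-edges in a random $G$ is approximately Poisson with a mean that diverges. A truncated inclusion--exclusion for $\Pr(Z=0)$ is then useless at the required multiplicative precision: the Bonferroni partial sums differ by terms of order $\lambda^k/k!$ with $\lambda=F/S$, which near $k\approx\lambda$ are of size roughly $e^{\lambda}$, whereas the target $e^{-\lambda}$ is exponentially small by comparison. Your claim that higher-order correlations ``contribute only at the $F^3/S^5$ level'' conflates the size of individual correlation corrections with the behaviour of the alternating sum itself; the sum does not converge term by term in this regime. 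This is precisely the obstruction that forces the ratio-of-consecutive-classes method (or an equivalent) rather than inclusion--exclusion. Second, Theorem~\ref{GMW} requires $\smax\tmax=o(S^{2/3})$, which is \emph{not} among the hypotheses of Theorem~\ref{thm:bip-avoid} (only $\deltamax=o(S)$ is assumed, so $\smax\tmax$ may be nearly linear in $S$). The theorem asserts only the \emph{ratio} $B(\svec,\tvec,X)/B(\svec,\tvec)$, and the paper's switching argument estimates that ratio directly without ever needing an absolute enumeration formula; your proposal cannot even get started in the part of the parameter range where Theorem~\ref{GMW} does not apply. The self-referential avoidance factor you flag is a third, unresolved, difficulty, though it is less fundamental than the first two.
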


The first step in the proof of Theorem~\ref{thm:bip-avoid} is to show that under our assumptions, we do not expect
many edges of $X$ to appear in a typical element of $\mathcal{B}(\svec,\tvec)$.  Define
\[ N_0 = \lceil\max\{ \log S , 42F/S \}\rceil.\] 

\begin{lemma} \label{initial-bipartite}
Suppose that $\smax \tmax = o(S)$, $\smax + \tmax = o(S/\log S)$ and $(\smax+\tmax) F=o(S^2)$.
The probability that a uniformly randomly chosen element of $\mathcal{B}(\svec,\tvec)$ contains more than $N_0$ edges of $X$  is $O(1/S^2)$. 

\end{lemma}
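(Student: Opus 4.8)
Let $Z=Z(G)$ denote the number of edges of $X$ present in a uniformly random $G\in\calB(\svec,\tvec)$, so the quantity to bound is $\Pr[Z>N_0]$. The plan is to control the upper tail of $Z$ through its factorial moments together with a Markov-type inequality. Since $Z>N_0$ forces $(Z)_{N_0}\ge N_0!$, taking $k=N_0$ gives
\[
\Pr[Z> N_0]\le \Pr\bigl[(Z)_{N_0}\ge N_0!\bigr]\le \frac{\E[(Z)_{N_0}]}{N_0!}.
\]
First I would record the exact expansion
\[
\E[(Z)_k]=\sum \Pr[e_1,\ldots,e_k\in G],
\]
the sum running over ordered $k$-tuples of \emph{distinct} edges of $X$. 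The guiding heuristic is that each edge $u_iv_j$ appears with probability close to $s_it_j/S$, so that $\E[Z]\approx F/S$ and more generally $\E[(Z)_k]$ should be near $(F/S)^k$; the threshold $N_0\ge 42F/S$ is chosen to sit a constant factor above this mean, which is exactly what makes the tail summable.

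The technical heart is a bound on the joint appearance probability $\Pr[e_1,\dots,e_k\in G]$ that is uniform for $k$ as large as $N_0$. Writing $L=\{e_1,\dots,e_k\}$ and letting $(\svec_L,\tvec_L)$ be its degree sequence, graphs containing $L$ correspond to graphs of degree sequence $(\svec-\svec_L,\tvec-\tvec_L)$ that avoid $L$, whence $\Pr[L\subseteq G]\le B(\svec-\svec_L,\tvec-\tvec_L)/B(\svec,\tvec)$. I would estimate this ratio by peeling off the edges of $L$ one at a time and handling each single-edge ratio by a switching argument: from a graph containing a given edge, deleting it and rerouting along an alternating $4$-cycle maps to graphs avoiding it, with about $S$ forward switchings and at most $s_it_j$ reverse ones. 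Carefully accounting for the discarded (``bad'') switchings should show that each factor is at most $\tfrac{s_it_j}{S}$ times $1+o(1)$, with the $o(1)$ of order $(\smax+\tmax)/S$, and valid along the whole peeling because $\smax\tmax=o(S)$ keeps the forward count positive while $N_0=o(S)$ (a consequence of $(\smax+\tmax)F=o(S^2)$) keeps the edge sum from dropping appreciably. Multiplying over the $k\le N_0$ edges and summing over tuples (dropping the distinctness constraint for an upper bound) then yields
\[
\E[(Z)_k]\le \Bigl(\frac{F}{S}\Bigr)^{k}\exp\Bigl(O\bigl(\tfrac{(\smax+\tmax)k}{S}\bigr)\Bigr).
\]
This is the step I expect to be the main obstacle: the per-edge relative error must be brought down to order $(\smax+\tmax)/S$, rather than the cruder $\smax\tmax/S$ that a naive switching count produces, and it must hold uniformly as the degree sequence is reduced, so that the accumulated correction over $N_0$ edges stays bounded.

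The hypotheses are calibrated precisely so that the exponent above is $o(1)$ at $k=N_0$: the condition $\smax+\tmax=o(S/\log S)$ handles the regime $N_0=\lceil\log S\rceil$, while $(\smax+\tmax)F=o(S^2)$ handles the regime $N_0=\lceil 42F/S\rceil$, in both cases giving $(\smax+\tmax)N_0/S=o(1)$. Finally I would set $k=N_0$ and apply Stirling in the form $N_0!\ge (N_0/e)^{N_0}$ to obtain
\[
\Pr[Z>N_0]\le \Bigl(\frac{eF}{S\,N_0}\Bigr)^{N_0}e^{o(1)}\le \Bigl(\frac{e}{42}\Bigr)^{N_0}e^{o(1)},
\]
using $N_0\ge 42F/S$. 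Since $\ln 42>3$, the exponent satisfies $1-\ln 42<-2$, and as $N_0\ge\log S$ this gives $(e/42)^{N_0}\le (e/42)^{\log S}=S^{\,1-\ln 42}=O(S^{-2})$, which completes the proof.
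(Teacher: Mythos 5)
Your overall architecture coincides with the paper's: bound the tail by a factorial moment (equivalently, by the expected number of $(N_0{+}1)$-subsets of $X$ contained in $G$), control the joint appearance probability of a set of edges of $X$ by switchings, and finish with $k!\ge(k/e)^k$ together with the calibration $N_0\ge 42F/S$ and $N_0\ge\log S$. The only structural difference is mechanical: the paper fixes a set $A$ of $f=N_0+1$ edges and performs a single simultaneous switching that removes all of $A$ at once, staying inside $\mathcal{B}(\svec,\tvec)$ throughout, which yields $|B_1(A)|/|B_0(A)|\le \prod_{u_iv_j\in A}s_it_j\big/\bigl(S-2\smax\tmax-2(\smax+\tmax)f\bigr)^f$ directly, with no need to track a shrinking degree sequence or a growing forbidden set during a peeling.

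There is, however, a genuine gap at the step you yourself flag as the main obstacle. The per-edge relative error of the switching cannot be brought down to $O((\smax+\tmax)/S)$: when you delete $u_iv_j$ and reroute along an alternating $4$-cycle through a second edge $u_pv_q$, you must exclude the choices with $u_iv_q\in G$ or $u_pv_j\in G$ to avoid creating a double edge, and there are genuinely up to $s_i\tmax+t_j\smax=\Theta(\smax\tmax)$ such choices; correspondingly the true edge probability carries relative corrections of order $\smax\tmax/S$, not $(\smax+\tmax)/S$. Since the hypotheses permit, say, $\smax=\tmax=\sqrt S/\log\log S$, the accumulated factor $\exp\bigl(O(\smax\tmax N_0/S)\bigr)$ need not be $e^{o(1)}$, so your displayed bound $\E[(Z)_k]\le(F/S)^k\exp\bigl(O((\smax+\tmax)k/S)\bigr)$ and the final line as written do not follow. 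The repair is to notice that the strong per-edge error is unnecessary: keep the $(1+o(1))$ inside the $k$-th power. The achievable per-edge bound $\frac{s_it_j}{S}\bigl(1+O(\smax\tmax/S+(\smax+\tmax)N_0/S)\bigr)=\frac{s_it_j}{S}(1+o(1))$, which your hypotheses do deliver, gives $\E[(Z)_{N_0}]/N_0!\le\bigl(eF(1+o(1))/(SN_0)\bigr)^{N_0}\le(e/41)^{N_0}$ for large $S$, and since $\ln(41/e)>2$ and $N_0\ge\log S$ this is still $O(S^{-2})$. This is exactly how the paper absorbs the same error term. A smaller point: the telescoping of $B(\svec-\svec_L,\tvec-\tvec_L)/B(\svec,\tvec)$ into single-edge ratios needs the already-placed edges $L_{i-1}$ carried along as a forbidden set at each stage (the intermediate classes are counted by $B(\cdot,\cdot,L_{i-1})$, not $B(\cdot,\cdot)$); the paper's one-shot switching sidesteps this bookkeeping entirely.
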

\begin{proof}
Let $f = N_0 +1$.
For any set $A\subseteq X$ of $f$ distinct edges of $X$, let $B_1(A)$ be the set of all bipartite graphs $G\in \mathcal{B}(\svec,\tvec)$ with $A\subseteq G$ and let $B_0(A)$ be the set of all bipartite graphs $G\in \mathcal{B}(\svec,\tvec)$ with $A\cap G = \emptyset$. 
Suppose $A=\{e_1,\ldots,e_f\}$ where $e_i = u_{j_i}v_{k_i}$ for each $i\in [f]$.
Consider the following switching operation. From a bipartite graph $G\in B_1(A)$:
\begin{itemize}
    \item Choose $f$ edges $\hat{e}_1,\ldots, \hat{e}_f$ of $G$, where $\hat{e}_i = u_{p_i}v_{q_i}$ with $p_i\in [m]$ and $q_i\in [n]$, for $i\in [f]$, such that $\hat{e}_1,\ldots, \hat{e}_f$ are pairwise disjoint and disjoint from all elements of $A$, and such that
    \[ \{ u_{j_i}v_{q_i}, u_{p_i}v_{k_i} : i\in [f]\} \, \cap \, G = \emptyset.\]
    \item Form a new bipartite graph $G'$ from $G$ by deleting the edges $\{ \hat{e}_1,\ldots, \hat{e}_f\}\cup A$ and inserting the edges
    $\{ u_{j_i} v_{q_i},\, u_{p_i}v_{k_i} \, : \,  i \in [f]\}$.
\end{itemize}
The resulting graph $G'$ belongs to $B_0(A)$.
For each graph $G\in B_1(A)$ there are at least  $\bigl(S-2\smax \tmax - 2(\smax + \tmax)f\bigr)^f$ choices of forward switchings. To see this, we can choose the edges $\hat{e}_i$ in order: when choosing $\hat{e}_i$ we must exclude up to $2 (\smax +\tmax )f$ choices which intersect an element of $A$ or which intersect one of the already-chosen edges
$\hat{e}_1,\ldots, \hat{e}_{i-1}$, and we must exclude up to $2\smax\tmax$ choices which have $u_{j_i}v_{q_i}\in G$ or $u_{p_i}v_{k_i}\in G$.

Conversely, there are at most $\prod_{u_i v_j \in A} s_i t_j$ ways to produce a graph $G\in B_1(A)$ using a reverse switching from a given graph $G'\in B_0(A)$, since for each element $e_i$ of $A$ we must choose a pair of edges of $G'$, one incident with each endvertex of $e_i$. 
It follows that for all $A\subseteq X$ with $|A|=f$, the probability that a uniformly randomly chosen element of
$\mathcal{B}(\svec,\tvec)$ contains all elements of $A$ as edges can be bounded
above as
\[ \frac{|B_1(A)|}{B(\svec,\tvec)} \leq \frac{|B_1(A)|}{|B_0(A)|} \leq \frac{\prod_{u_i v_j \in A} s_i t_j}{\bigl(S-2\smax\tmax - 2(\smax +\tmax) f\bigr)^f}.\]
Note that the lemma assumptions imply that $\smax\tmax + (\smax+\tmax)N_0=o(S)$.

Let $\binom{X}{f}$ denote the set of all subsets of $X$ of size $f$.
The desired probability is at most the expected number of sets of $f$ edges of $X$ which are contained in $G$, 
which is at most
\begin{align*}
\sum_{A \in \binom{X}{f}} \frac{|B_1(A)|}{|B_0(A)|}
&\le \sum_{A \in \binom{X}{f}}  \frac{\prod_{u_i v_j \in A} s_i t_j}{\bigl(S-2 \smax \tmax -2(\smax+\tmax)f\bigr)^f} \\
&\le \frac{1}{f!}\, \biggl(\frac{F}{S(1-o(1))}\biggr)^f 
\le \biggl(\frac{e F}{fS (1-o(1)}\biggr)^f
\le \biggl(\frac{e}{41}\biggr)^f
= O(1/S^2).
\end{align*}
These inequalities follow from the definition of $N_0$ and our assumptions, together with the combinatorial identity
\[ \sum_{1\leq i_1 < i_2 < \cdots < i_k\leq r} a_{i_1}a_{i_2}\cdots a_{i_k} \leq \frac{1}{k!} \biggl(\,\sum_{i\in [r]} a_i\biggr)^{\!k}\]
applied with $r=|X|$, $k=f$ and where $a_1,\ldots, a_r$ is a sequence formed by the elements of the multiset $\{ s_it_j \, : u_iv_j\in X\}$ in some order, respecting multiplicities.
\end{proof}

For $f =0,1,\ldots, N_0$, let $\mathcal{B}_{f} = \mathcal{B}_{f}(\svec, \tvec, X)$ be the set of all bipartite graphs in $\calB(\svec,\tvec)$ which contain exactly $f$ edges from $X$. Note that $B(\svec,\tvec,X) = |\mathcal{B}_0(\svec,\tvec,X)|$.
We use a switching argument to approximate the ratio of the sizes of consecutive sets $\mathcal{B}_f$ and $\mathcal{B}_{f-1}$.

We will make use of the following switching operations.
A \textit{forward switching}, designed to reduce the number of edges of $X$ contained in the graph by exactly one, proceeds as follows.
From  $G\in \mathcal{B}_f$, 
\begin{itemize}
    \item 
    Choose an edge $u_iv_j\in G\cap X$ and two edges
$u_av_c, u_bv_d\in G\setminus X$ such that $u_iv_c,u_av_d,\allowbreak u_b v_j\notin G\cup X$.
    \item Let $G'$ be the graph obtained from $G$ by replacing these three edges by $u_i v_c$, $u_a v_d$, $u_b v_j$.
\end{itemize}
This switching operation is shown in Figure~\ref{fig:BipartiteSwitching}.  
By construction, the switching produces a (simple) graph $G'\in\mathcal{B}_{f-1}$.
Note also that the conditions on the chosen edges imply that the six vertices involved in the switching
are distinct. (In particular this follows by considering the 6-cycle $u_iv_ju_bv_du_av_cu_i$, which alternates between edges and non-edges of $G$.)

\begin{figure}[htb]
\unitlength=1cm
\centering
\begin{picture}(12,3.6)
  \put(0.5,0){\includegraphics[scale=0.9]{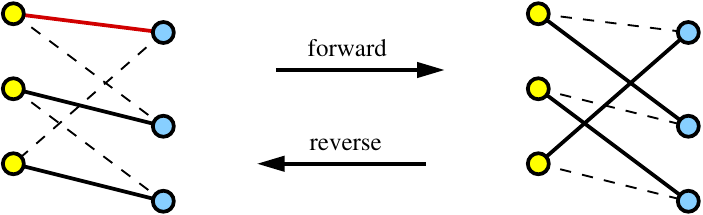}}
  \put(0.0,3.0){$u_i$}
  \put(0.0,1.85){$u_a$}
  \put(0.0,0.75){$u_b$}
  \put(3.3,2.7){$v_j$}
  \put(3.3,1.28){$v_c$}
  \put(3.3,0.14){$v_d$}
  \put(8.02,3.0){$u_i$}
  \put(8.02,1.85){$u_a$}
  \put(8.02,0.75){$u_b$}
  \put(11.3,2.7){$v_j$}
  \put(11.3,1.28){$v_c$}
  \put(11.3,0.14){$v_d$}
\end{picture}
   \caption{Switching to remove an edge of $X$}
   \label{fig:BipartiteSwitching}
\end{figure}

A \textit{reverse switching} is the reverse of the forward switching. It proceeds as follows: starting with a graph $G'\in\mathcal{B}_{f-1}$,
\begin{itemize}
\item Choose $u_iv_j\in X\setminus G'$, then choose an edge $u_iv_c\in G'\setminus X$ incident with $u_i$,
an edge $u_bv_j\in G'\setminus X$ incident with $v_j$, and a third edge $u_av_d\in G'\setminus X$, such that $u_av_c,\, u_bv_d\notin G'\cup X$.
\item Let $G$ be the graph obtained from $G'$ by deleting the edges $u_iv_c$, $u_bv_j$, $u_av_d$
and inserting the edges $u_iv_j$, $u_av_c$, $u_bv_d$.
\end{itemize}
By construction, the reverse switching produces a (simple) graph $G\in\mathcal{B}_f$.
Again, the conditions of the switching imply that the six vertices are distinct.

\begin{lemma}
\label{forbidden-ratio-bipartite}
Suppose that the assumptions of
Theorem~\textit{\ref{thm:bip-avoid}} hold.
Then
\[
|\mathcal{B}_{f}| = | \mathcal{B}_{f-1} | \, \frac{F(S-f+1) - O\bigl(\deltamax (F+(f-1)S)\bigr)}
  {
f \bigl( (S-f)^2 - F \bigr)
}
\]
uniformly for all $f \in [N_0]$ such that $\mathcal{B}_{f-1}$ is nonempty.
\end{lemma}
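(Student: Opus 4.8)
The plan is to realise the forward and reverse switchings as mutually inverse operations between $\calB_f$ and $\calB_{f-1}$, and to estimate each switching count \emph{uniformly} rather than on average. Write $N^{+}(G)$ for the number of forward switchings applicable to $G\in\calB_f$ and $N^{-}(G')$ for the number of reverse switchings applicable to $G'\in\calB_{f-1}$. Since a forward switching $G\to G'$ is inverted by exactly the reverse switching $G'\to G$ carrying the matching six-vertex data, the total number of switching instances counted from each side agrees:
\[ \sum_{G\in\calB_f} N^{+}(G) = \sum_{G'\in\calB_{f-1}} N^{-}(G'). \]
So it suffices to show $N^{+}(G) = f\bigl((S-f)^2-F\bigr)\bigl(1+O(\deltamax/S)\bigr)$ uniformly over $G\in\calB_f$ and $N^{-}(G') = F(S-f+1) - O\bigl(\deltamax(F+(f-1)S)\bigr)$ uniformly over $G'\in\calB_{f-1}$; dividing then yields the stated ratio.

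For the forward count I fix $G\in\calB_f$ and an anchor edge $u_iv_j\in G\cap X$ (there are $f$ of these), and count the ordered pairs of edges $u_av_c,u_bv_d\in G\setminus X$ completing the alternating $6$-cycle, of which there are $(S-f)^2$ with repetition allowed. From this I subtract the configurations forbidden by $u_iv_c,u_av_d,u_bv_j\notin G\cup X$ and by the requirement that the six vertices be distinct. The crucial point is that the coupling condition $u_av_d\notin X$ removes, to leading order, $\sum_{u_av_d\in X}s_at_d=F$ pairs: summing the at most $t_d$ choices of $u_bv_d$ over the $X$-neighbours $v_d$ of $u_a$, and the at most $s_a$ choices of $u_av_c$ over all $u_a$, collapses to $F$, independently of the anchor $u_iv_j$. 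Every other forbidden family — $u_av_d\in G$, the conditions at $u_i$ and at $v_j$, and vertex coincidences — loses one degree of freedom and carries a factor at most $\deltamax$ (bounded via $\smax,\tmax,\xmax,\ymax$), and so contributes only $O(\deltamax S)$; since $(S-f)^2-F=\Theta(S^2)$ for $F=o(S^{5/3})$ and $f\le N_0$, this is a relative error $O(\deltamax/S)$.

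For the reverse count I fix $G'\in\calB_{f-1}$ and sum over the anchor $u_iv_j\in X\setminus G'$: the number of choices of $u_iv_c\in G'\setminus X$ is $s_i$ up to $O(x_i)$, that of $u_bv_j\in G'\setminus X$ is $t_j$ up to $O(y_j)$, and that of the third edge $u_av_d\in G'\setminus X$ is $S-f+1$, so the product summed over $X$ gives the main term $F(S-f+1)$. Two corrections remain. Restricting the anchor to $X\setminus G'$ instead of all of $X$ discards the $f-1$ edges of $X\cap G'$, which costs $\sum_{u_iv_j\in X\cap G'}s_it_j\,(S-f+1)\le(f-1)\smax\tmax(S-f+1)=O(\deltamax(f-1)S)$. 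The non-adjacency conditions $u_av_c,u_bv_d\notin G'\cup X$, together with the degree corrections $O(x_i),O(y_j)$ and vertex coincidences, each trim a relative $O(\deltamax/S)$ fraction of the main term, costing $O(\deltamax F)$ in absolute terms. This gives the claimed estimate for $N^{-}(G')$.

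Combining the two estimates with the displayed identity, the factor $1+O(\deltamax/S)$ from the forward side passes into the numerator and is absorbed because $F(S-f+1)\cdot O(\deltamax/S)=O(\deltamax F)$, reproducing the expression in the lemma. The main obstacle is precisely the uniform control of the coupling conditions $u_av_d\notin G\cup X$ (forward) and $u_av_c,u_bv_d\notin G'\cup X$ (reverse), which entangle the chosen edges: one must verify that these isolate the single global $-F$ correction in the denominator while leaving only an $O(\deltamax S)$ forward remainder and an $O(\deltamax(F+(f-1)S))$ reverse remainder, and that all of these bounds hold for \emph{every} $G$ and $G'$, not merely on average, which is what delivers the sharp multiplicative error.
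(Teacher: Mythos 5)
Your proposal is correct and follows essentially the same route as the paper: the identical forward/reverse switchings, the double-counting identity $\sum_{G\in\mathcal{B}_f}N^{+}(G)=\sum_{G'\in\mathcal{B}_{f-1}}N^{-}(G')$, and the same decomposition into a main term minus excluded configurations, with the $-F$ correction isolated in the forward count and the remaining exclusions bounded by $O(\deltamax S)$ per anchor (forward) and $O(\deltamax(F+(f-1)S))$ (reverse). The only quibble is that the exclusions with $u_iv_c\in X$ or $u_bv_j\in X$ in the reverse count are of size $O((f-1)\deltamax S)$ rather than a relative $O(\deltamax/S)$ fraction of $F(S-f+1)$, but this still falls within the stated error term, so the argument stands.
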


\begin{proof}
Given $G \in \mathcal{B}_{f}$, let $N = N(G)$ be the number of forward switchings which can be applied to $G$. There are $f$ choices for the edge $u_i v_j\in G\cap X$, and at most $(S-f)^2$ choices for the two edges $u_a v_c, u_b v_d\in G\setminus X$.
Of these choices, 
\[ fF+ O\bigl(f^2(\xmax\tmax + \smax\ymax) \bigr)\]
have $u_av_d\in X$, where the error term comes from
the possibility that one or both of the edges $u_av_c$, $u_bv_d$ also belong to $X$.
This leads to the upper bound
\begin{equation}
N\leq f\bigl( (S-f)^2 - F + O\bigl(f(\xmax\tmax + \smax\ymax) \bigr)\bigr).
\label{eq:N-upper}
\end{equation}
Next we consider the following (possibly overlapping) choices which violate a condition of the forward switching.  (Recall that these conditions imply that the six vertices are
distinct, so we do not need to consider this case separately.) 
\begin{itemize}
\item \textit{A double edge is created}:\ First suppose that $u_i v_c \in G$. There are $f$ choices for $u_i v_j$, then at most $\smax-1$ choices for $v_c$, then at most $\tmax-1$ choices for $u_a$, and at most $S$ choices for $u_b v_d$. The same estimate holds
if $u_av_d\in G$ or $u_bv_j\in G$.  Hence there are $O(f\smax\tmax S)$ choices in this case. 
\item \textit{An edge of $X$ is created}:\ We have already considered the case
that $u_av_d\in X$ above. Next, suppose that $u_i v_c \in X$. 
There are $f$ choices for $u_i v_j$, then at most $\xmax-1$ choices for $v_c$, then at most $\tmax-1$ choices for $u_a$, and at most $S$ choices for $u_b v_d$. So there are at most $f\tmax\xmax S$ such choices, and similarly at most $f\smax\ymax S$ choices with $u_bv_j\in X$. 
\end{itemize}
Comparing the number of exclusions to the
upper bound (\ref{eq:N-upper}), we obtain
\begin{equation} \label{forbidden-forward-count}
N = f \bigl( (S-f)^2 - F +  O\bigl(\deltamax S \bigr)\bigr) 
= f \bigl( (S-f)^2 - F\bigr)\bigl( 1 + O(\deltamax/S)\bigr).
\end{equation}

\medskip

Now we analyse the reverse switching. Given $G' \in \mathcal{B}_{f-1}$, let $N' = N'(G')$ be the number of reverse switchings which can be applied to $G'$. There are $F$ ways to choose $u_i v_c, u_b v_j\in G'$ such that $u_iv_j \in X$, and there are  at most $S-(f-1)$ choices for the edge $u_a v_d\in G'\setminus X$.
The following (possibly overlapping) choices violate a condition of the reverse switching.
\begin{itemize}
\item \textit{More than one edge of $X$ is created}:\ Suppose that $u_a v_c \in X$. There are $F$ ways to choose $u_i v_c$ and $u_b v_j$, then at most $\ymax$ choices for $u_a$, then at most $\smax-1$ choices for $v_d$. Hence there are at most $O(\smax\ymax F)$ such choices, and similarly there are at most $O(\xmax\tmax F)$
choices with $u_bv_d\in X$.
\item \textit{An edge of $X$ is removed}: Since we ensured that $u_av_d\notin X$, this case can only arise if $u_iv_c\in X$ or $u_bv_j\in X$.
There are at most $O\bigl((f-1)(\xmax\tmax + \smax\ymax)S\bigr)$ such choices.
\item \textit{A double edge is created}:\ First suppose that $u_i v_j \in G'$. There are $f-1$ choices for $u_i v_j$, then at most $\smax-1$ choices for $v_c$ and at most $\tmax-1$ choices for $u_b$, and at most $S$ choices for $u_a v_d$. This gives at most $O((f-1)\smax \tmax S)$ such choices.  Next, if $u_av_c\in G'$ then there are $F$ ways to choose $u_iv_c$ and $u_bv_j$, then at most $\tmax-1$ choices for $u_a$ from the neighbourhood of $v_c$, and at most $\smax-1$ choices for $v_d$ from the neighbourhood of $u_a$.
Hence there are at most $O(\smax\tmax F)$ such choices, and the same estimate holds if $u_bv_d\in G'$.
\end{itemize}

By subtracting the number of bad choices and comparing with the upper bound, we have
\begin{equation} \label{forbidden-reverse-count}
  N' = F(S-f+1) - O\bigl(\deltamax (F+(f-1)S)\bigr).
\end{equation}

The proof is completed by observing that $\sum_{G\in \mathcal{B}_f} N(G) = \sum_{G'\in\mathcal{B}_{f-1}} N'(G')$, applying (\ref{forbidden-forward-count}) and (\ref{forbidden-reverse-count}) and using our assumptions.
\end{proof}

\bigskip

To combine these estimates will require the following technical lemma. 

\begin{lemma}[\protect{\cite[Corollary 4.5]{GMW},\cite[Lemma~2.4]{BG2016}}]
\label{summation-lemma}
Let $N \ge 2$ be an integer and, for $1 \le i \le N$, let real numbers $A(i), C(i)$ be given such that $A(i) \ge 0$ and $A(i) - (i-1)C(i) \ge 0$. Define $A_1 = \min_{i\in [N]} A(i), A_2 = \max_{i\in [N]} A(i), C_1 = \min_{i\in [N]} C(i), C_2 = \max_{i\in [N]} C(i)$. Suppose that there exists a real number $\hat{c}$ with $0 < \hat{c} < \tfrac13$ such that $\max\{ A_2/N, |C_1|, |C_2| \} \le \hat{c}$. Define $n_0, \ldots n_N$ by $n_0 = 1$ and
\[
n_i = \frac{1}{i} \bigl( A(i) - (i-1)C(i) \bigr) n_{i-1}
\]
for $i\in [N]$. Then
\[
\Sigma_1 \le \sum_{i\in [N]} n_i \le \Sigma_2,
\]
where
\begin{align*}
\Sigma_1 &= \exp(A_1 - \tfrac12 A_1 C_2) - (2e\hat{c})^N, \\
\Sigma_2 &= \exp(A_2 - \tfrac12 A_2 C_1 + \tfrac12 A_2 C_1^2) + (2e\hat{c})^N.
\end{align*}
\end{lemma}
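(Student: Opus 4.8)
The plan is to compare the sequence $(n_i)$ against two constant-coefficient reference sequences obtained by freezing $A(\cdot)$ and $C(\cdot)$ at their extreme values, and to use that these references have an explicit total sum. For constants $A, C$ set $r_0 = 1$ and $r_i = \frac{A - (i-1)C}{i}\,r_{i-1}$; then $r_i = \binom{A/C}{i}C^i$, and by the generalized binomial theorem $\sum_{i=0}^{\infty} r_i = (1+C)^{A/C} = \exp\bigl(\frac{A}{C}\log(1+C)\bigr)$ (read as $e^A$ when $C=0$, and well-defined since $|C|\le\hat c<1$). I would take the upper reference $\bar r_i$ with $(A,C)=(A_2,C_1)$ and the lower reference $\underline r_i$ with $(A,C)=(A_1,C_2)$. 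The hypotheses $A_1\le A(i)\le A_2$, $C_1\le C(i)\le C_2$ and $A(i)-(i-1)C(i)\ge 0$ are exactly what is needed to sandwich the actual multiplying factors, so an induction yields $\underline r_i\le n_i\le \bar r_i$ as long as the relevant reference factor stays nonnegative. Two analytic inputs then finish the job: the Taylor estimates $\frac{\log(1+C)}{C}\le 1-\frac12 C+\frac12 C^2$ and $\frac{\log(1+C)}{C}\ge 1-\frac12 C$, valid for $|C|\le\hat c<\frac13$, which (using $A_1,A_2\ge 0$) convert the closed forms into the stated exponentials $\exp(A_2-\frac12 A_2C_1+\frac12 A_2C_1^2)$ and $\exp(A_1-\frac12 A_1C_2)$; and a truncation estimate comparing the finite sum $\sum_{i=0}^{N} r_i$ with the infinite closed form.

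For the upper bound I would first prove $n_i\le\bar r_i$ for $i\le N$ by induction, propagating the inequality from $A(i)-(i-1)C(i)\le A_2-(i-1)C_1$ together with $0\le n_{i-1}\le\bar r_{i-1}$ and the nonnegativity of both factors (which holds throughout $i\le N$). Summing and writing $\sum_{i=0}^{N}\bar r_i=(1+C_1)^{A_2/C_1}-\sum_{i>N}\bar r_i$, it remains to bound the discarded tail below by $-(2e\hat c)^N$. The two quantitative facts doing the work here are that $A_2\le\hat c N$ forces $|\bar r_i|\le \frac{(A_2+(i-1)\hat c)^i}{i!}\le(2e\hat c)^i$ for every $i>N$ (via $i!\ge (i/e)^i$), and that once $i$ exceeds $A_2/C_1$ the reference factors turn negative, so the tail becomes an alternating series of strictly decreasing magnitude (the magnitude ratio being $\tfrac{(i-1)C_1-A_2}{i}<C_1<1$). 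Combining the geometric bound on individual terms with the elementary fact that such an alternating tail has the sign of its first term gives $\sum_{i>N}\bar r_i\ge -(2e\hat c)^N$, as needed; when $C_1\le 0$ the tail is simply nonnegative and this step is trivial.

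The lower bound is where the real work lies, and it is the step I expect to be the main obstacle. With $\underline r_i$ built from $(A_1,C_2)$, the induction $n_i\ge\underline r_i$ is only valid while $A_1-(i-1)C_2\ge 0$, i.e. for $i\le i^\ast:=\lfloor A_1/C_2\rfloor+1$ when $C_2>0$ (and for all $i$ when $C_2\le 0$). Since every $n_i\ge 0$, discarding terms gives $\sum_{i=0}^{N}n_i\ge\sum_{i=0}^{\min(i^\ast,N)}\underline r_i$, and the difficulty is to show this truncated reference sum is at least $(1+C_2)^{A_1/C_2}-(2e\hat c)^N$. When $i^\ast\ge N$ this is the same truncation-at-$N$ estimate as before. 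When $i^\ast<N$ a naive geometric bound on $\sum_{i>i^\ast}\underline r_i$ is too weak, since $i^\ast$ can be far below $N$; the resolution is to observe that the peak of $\underline r_i$ sits near $i\approx A_1$, well below the sign-change index $i^\ast\approx A_1/C_2$, so for $i>i^\ast$ the sequence $\underline r_i$ alternates with strictly decreasing magnitude and its first term $\underline r_{i^\ast+1}$ is negative. Hence $\sum_{i>i^\ast}\underline r_i\le 0$, giving $\sum_{i=0}^{i^\ast}\underline r_i=(1+C_2)^{A_1/C_2}-\sum_{i>i^\ast}\underline r_i\ge(1+C_2)^{A_1/C_2}$ with no loss at all. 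Getting the bookkeeping of these sign patterns correct across the regimes $C_2\le 0$, $i^\ast\ge N$, and $i^\ast<N$ — while verifying the monotonicity of the tail magnitudes and keeping the constant in $(2e\hat c)^N$ honest — is the fiddly core of the argument. I would then assemble the Taylor exponent bounds with the two tail estimates to read off $\Sigma_1$ and $\Sigma_2$; as the result is quoted from \cite{GMW} and \cite{BG2016}, I would present this as a streamlined verification rather than a derivation from scratch.
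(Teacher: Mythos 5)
This lemma is quoted in the paper without proof, so the only comparison available is with the cited sources. Your route is essentially theirs: \cite{GMW} first analyses the constant-coefficient recurrence, whose terms are $\binom{A/C}{i}C^i$ with total sum $(1+C)^{A/C}$, then sandwiches the general $n_i$ between the $(A_1,C_2)$- and $(A_2,C_1)$-references exactly as you do, converts the exponents via $1-\tfrac12C\le\log(1+C)/C\le 1-\tfrac12C+\tfrac12C^2$ (the second inequality is where $\hat c<\tfrac13$ enters), and controls the truncation error using $i!\ge(i/e)^i$, which is the origin of $(2e\hat c)^N$. Your analysis of the tails --- nonnegative up to the sign-change index, then alternating with magnitude ratio below $\abs{C}<1$ --- is the right mechanism, and you correctly locate the lower bound's tail as the delicate step.

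Two caveats. The step ``$\abs{\bar r_{i}}\le(2e\hat c)^{i}$ for $i>N$, hence the first negative term is at least $-(2e\hat c)^N$'' only closes when $2e\hat c\le1$, whereas the lemma allows $\hat c$ up to $\tfrac13>1/(2e)$; in that regime you need the sharper observation that the factor at the sign-change index $i_0$ has modulus at most $C_1\le\hat c$ while all earlier factors are at most $A_2\le\hat cN$, giving $\abs{\bar r_{i_0}}\le(e\hat c)^{i_0}\le(e\hat c)^N\le(2e\hat c)^N$ since $e\hat c<1$; similar care is needed on the nonnegative stretch $N<i\le i^\ast$ of the lower-bound tail, where a termwise geometric bound with ratio $e\hat c$ rather than $2e\hat c$ is what actually sums correctly. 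Separately, the statement as printed sums over $i\in[N]$, omitting $n_0=1$, which would make the bound $\Sigma_1$ false (take all $A(i)=C(i)=0$); the intended and applied form is $\sum_{i=0}^N n_i$, which is what you prove. Neither point affects the substance of your argument.
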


\medskip

\begin{lemma}
\label{forbidden-sum-bipartite}
Let $X\subseteq U\times V$ be a 
bipartite graph on $U\cup V$.
Suppose that the assumptions of Theorem~$\ref{thm:bip-avoid}$ hold and that $F\geq 1$.
Then
\[
\sum_{f=0}^{N_0} |\mathcal{B}_{f}| 
= |\mathcal{B}_{0}| \,\exp \biggl( \frac{F}{S} + \frac{3F^2}{2S^3}+ O \biggl( \frac{\deltamax F}{S^2}
+ \frac{F^3}{S^5}\biggr) \biggr).
\]
\end{lemma}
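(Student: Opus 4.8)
The plan is to apply the summation Lemma~\ref{summation-lemma} to the normalised quantities $n_f := |\mathcal{B}_f|/|\mathcal{B}_0|$. We may assume $|\mathcal{B}_0|\ge 1$, since otherwise there is nothing to prove. Set $R_f := f\,|\mathcal{B}_f|/|\mathcal{B}_{f-1}|$, so that Lemma~\ref{forbidden-ratio-bipartite} gives, for every $f\in[N_0]$ with $\mathcal{B}_{f-1}$ nonempty,
\[
R_f = \frac{F(S-f+1)-O\bigl(\deltamax(F+(f-1)S)\bigr)}{(S-f)^2-F}.
\]
Since $R_f\ge 0$, nonemptiness propagates upward from $\mathcal{B}_0$, so this identity holds for all $f\in[N_0]$ and $n_f = \tfrac1f R_f\, n_{f-1}$ with $n_0=1$. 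It remains to exhibit $A(f),C(f)$ with $R_f = A(f)-(f-1)C(f)$ in the form required by Lemma~\ref{summation-lemma}.

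First I would Taylor expand $R_f$ in powers of $1/S$, recalling that $f\le N_0 = O(\max\{\log S, F/S\})$. Writing the main term as $\tfrac{F}{S}\cdot\tfrac{1-f/S+1/S}{1-2f/S+f^2/S^2-F/S^2}$ and expanding gives
\[
\frac{F(S-f+1)}{(S-f)^2-F} = \frac{F}{S} + \frac{F^2}{S^3} + \frac{Ff}{S^2} + O\Bigl(\frac{F}{S^2}+\frac{Ff^2}{S^3}+\frac{F^2f}{S^4}\Bigr),
\]
while the error term of $R_f$ contributes $O(\deltamax F/S^2)+O(\deltamax(f-1)/S)$. Splitting $Ff/S^2 = F(f-1)/S^2 + F/S^2$ and collecting the $f$-independent part into $A$ and the coefficient of $(f-1)$ into $-C$, I would set
\[
A(f) = \frac{F}{S} + \frac{F^2}{S^3} + O\Bigl(\frac{\deltamax F}{S^2}\Bigr),\qquad
C(f) = -\frac{F}{S^2} + O\Bigl(\frac{\deltamax}{S}\Bigr).
\]
The book-keeping here is the crux, and is the step I expect to be most delicate: one must check that every discarded term lies in $O(\deltamax F/S^2 + F^3/S^5)$ in both regimes of $N_0$. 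This uses $F\ge1$ and $\deltamax\ge1$ to absorb $F/S^2$ and $\deltamax/S$ into $\deltamax F/S^2$, together with $f\le N_0$: when $N_0\asymp F/S$ terms like $Ff^2/S^3$ are $O(F^3/S^5)$, and when $N_0\asymp\log S$ they are $o(\deltamax F/S^2)$ because $F=o(S^{5/3})$.

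Next I would verify the hypotheses of Lemma~\ref{summation-lemma} with $N=N_0$. Nonnegativity of $A(f)$ and of $A(f)-(f-1)C(f)=R_f$ is immediate. For the smallness condition, $|C_1|,|C_2| = O(F/S^2 + \deltamax/S) = o(1)$, while the decisive estimate is $A_2/N_0$: since $N_0\ge 42F/S$ and $A_2 = (F/S)(1+o(1))$, we get $A_2/N_0 \le (1+o(1))/42$. Thus there is a constant $\hat{c}$ with $2e\hat{c} < e^{-2}$ and $\max\{A_2/N_0,|C_1|,|C_2|\}\le\hat{c}<\tfrac13$ for large $S$; this is precisely what the constant $42$ in the definition of $N_0$ secures, and it guarantees the additive error $(2e\hat{c})^{N_0}\le (2e\hat{c})^{\log S} = O(1/S^2)$.

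Finally I would substitute into the bounds $\Sigma_1,\Sigma_2$. Since $A_1,A_2$ agree up to $O(\deltamax F/S^2)$ and $C_1,C_2$ up to $O(\deltamax/S)$, both exponents collapse to
\[
A - \tfrac12 A C = \frac{F}{S} + \frac{F^2}{S^3} + \frac{F^2}{2S^3} + O\Bigl(\frac{\deltamax F}{S^2}+\frac{F^3}{S^5}\Bigr) = \frac{F}{S} + \frac{3F^2}{2S^3} + O\Bigl(\frac{\deltamax F}{S^2}+\frac{F^3}{S^5}\Bigr),
\]
the term $\tfrac12 A_2 C_1^2 = O(F^3/S^5)$ being harmless. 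The additive error $(2e\hat{c})^{N_0}=O(1/S^2)$ is absorbed into the relative error because $\exp(A-\tfrac12AC)\ge1$ and $\deltamax F/S^2 \ge 1/S^2$ (as $F,\deltamax\ge1$). Hence $\Sigma_1$ and $\Sigma_2$ squeeze $\sum_{f=0}^{N_0} n_f$ to $\exp\bigl(F/S + 3F^2/(2S^3) + O(\deltamax F/S^2 + F^3/S^5)\bigr)$, and multiplying through by $|\mathcal{B}_0|$ yields the claim.
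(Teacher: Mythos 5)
Your overall strategy is the paper's: feed the ratios from Lemma~\ref{forbidden-ratio-bipartite} into Lemma~\ref{summation-lemma} with $A(f)\approx F/S+F^2/S^3$ and $C(f)\approx -F/S^2$, choose $\hat c$ just above $1/42$ so that $(2e\hat c)^{N_0}=O(S^{-2})$, and absorb the additive error using $\deltamax F\ge 1$ and the fact that the sum is at least $1$. The Taylor expansion, the verification of $\max\{A_2/N_0,|C_1|,|C_2|\}\le\hat c$, and the final computation of $A-\tfrac12 AC=F/S+3F^2/(2S^3)$ all match the paper's proof; the only cosmetic difference is that the paper carries an extra $O(N_0F/S^3)$ term inside $C(f)$, which your bookkeeping pushes directly into the final error term, and that is harmless.

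The one genuine flaw is your treatment of empty classes. First, if $\mathcal{B}_0=\emptyset$ there \emph{is} something to prove: you must show that every $\mathcal{B}_f$ is then empty, which follows because the forward-switching count \eqref{forbidden-forward-count} is strictly positive for $f\ge1$ whenever $\mathcal{B}_f\neq\emptyset$, so nonemptiness propagates \emph{downward}. Second, and more seriously, your claim that ``nonemptiness propagates upward from $\mathcal{B}_0$'' is false as justified: $R_f\ge0$ permits $R_f=0$, and Lemma~\ref{forbidden-ratio-bipartite} does not guarantee that $F(S-f+1)-O\bigl(\deltamax(F+(f-1)S)\bigr)$ is positive under the stated hypotheses (for instance, bounded $F$ with $\deltamax\log S\to\infty$ is allowed, in which case the error term can swamp the main term). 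If $\mathcal{B}_{f_0}=\emptyset$ for some $f_0\le N_0$ while $\mathcal{B}_{f_0-1}\neq\emptyset$, then your $R_f$ is $0/0$ for $f>f_0$ and the recurrence you hand to Lemma~\ref{summation-lemma} is undefined. The paper repairs exactly this point: when $\mathcal{B}_{f-1}=\emptyset$ it sets the error coefficient $\alpha_f=0$, so that $A(f)=A_0\ge0$ and $C(f)=C_0\le0$, the recurrence holds trivially as $0=0$, and the hypothesis $A(f)-(f-1)C(f)\ge0$ remains valid. You need this (or an equivalent truncation argument) to make your proof complete.
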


\begin{proof}
By \eqref{forbidden-forward-count}, any $G \in \mathcal{B}_f$ can be converted to some $G' \in \mathcal{B}_{f-1}$ using a forward switching.  Therefore, if $\mathcal{B}_0$ is empty then $\mathcal{B}_{f}$ is empty for all $f\in [N_0]$.
The lemma holds in this case since both sides of the expression equal 0. So we assume that $\mathcal{B}_0\neq \emptyset$.

Define
\[
   A_0 = \frac{FS}{(S-1)^2-F}, \quad 
   C_0 = - \frac{F(S^2+F-1)}{( (S-1)^2-F)^2}.
\]
Then
\begin{align*}
   \frac{F(S-f+1)}{(S-f)^2-F} 
   &= A_0 - (f-1)C_0 + O\biggl(\frac{(f-1)^2 F}{S^3}\biggr)\\
     &= A_0 - (f-1)C_0 + O\biggl(\frac{(f-1) N_0 F}{S^3}\biggr),
\end{align*}
as can be seen by taking the Taylor expansion of the left hand side at $f=1$.   It follows from Lemma~\ref{forbidden-ratio-bipartite} that
\[ |\mathcal{B}_f| = \frac{|\mathcal{B}_{f-1}|}{f}\biggl(A_0 - (f-1)C_0 + O\biggl(\frac{\deltamax F}{S^2}
  + (f-1)\biggl(\frac{\deltamax}{S} + \frac{N_0  F}{S^3}\biggr)\biggr)\biggr)\]
uniformly for any $f\in [N_0]$ such that $\mathcal{B}_{f-1}$ is nonempty.
Hence we can define a real number $\alpha_f$
for all $f\in [N_0]$, such that  
\begin{align}\label{eq:recurrence}
|\mathcal{B}_{f}| = \frac{|\mathcal{B}_{f-1}|}{f}\biggl(A_0 + \frac{\alpha_f\, \deltamax F}{S^2}
  - (f-1)\biggl( C_0 - \alpha_f\biggl(\frac{\deltamax}{S} + \frac{N_0  F}{S^3}\biggr)\biggr)\biggr), 
\end{align}
where
$|\alpha_f|$ is bounded independently of $f$ and $S$.  In particular, if $\mathcal{B}_{f-1}$ is nonempty then $\alpha_f$ is uniquely defined by (\ref{eq:recurrence}),
while if $\mathcal{B}_{f-1}$ is empty then we let $\alpha_f=0$.  
Next, for $1 \le f \le N_0$, define
\begin{align*}
A(f) = A_0 + \frac{\alpha_f\, \deltamax F}{S^2},\quad
C(f) = C_0 - \alpha_f\biggl(\frac{\deltamax}{S} + \frac{N_0  F}{S^3}\biggr).
\end{align*}
Then for all $1 \le f \le N_0$ we can rewrite (\ref{eq:recurrence}) as 
\begin{equation} \label{forbidden-ratio-bipartite-ac}
|\mathcal{B}_f| = \frac{1}{f} \bigl( A(f) - (f-1)C(f) \bigr) \,|\mathcal{B}_{f-1}|.
\end{equation}

We wish to apply Lemma~\ref{summation-lemma}, so we must check that the conditions of that lemma hold. First we claim that $A(f)-(f-1)C(f)\ge 0$ for all $f\in [N_0]$. 
If $\mathcal{B}_{f-1}$ is nonempty 
then (\ref{forbidden-ratio-bipartite-ac}) implies that $A(f)-(f-1)C(f)\ge 0$, since
$|\mathcal{B}_f|\geq 0$.  Otherwise $\mathcal{B}_{f-1}$ is empty, and hence
$A(f)=A_0$ and $C(f)=C_0$.
Since $A_0\ge 0$ and $C_0\le 0$, it follows that $A_0-(f-1)C_0\ge 0$
for all~$f\in [N_0]$, and the first claim is established.
Next, the assumption that $\deltamax = o(S)$ implies that $A(f)\geq 0$ for large enough $S$, since $A_0 = \Theta(F/S)$.

Define $A_1, A_2, C_1, C_2$ to be the minimum and maximum of $A(f)$ and $C(f)$ over $f\in [N_0]$, as in Lemma~\ref{summation-lemma}, and set $\hat{c} = 1/41$. Since $A_2 = \frac{F}{S}(1 + o(1))$ 
and $C_1,C_2 = o(1)$, under our assumptions, 
we have for $S$ sufficiently large that
\[
\max\{A_2/N_0, |C_1|, |C_2|\} \le A_2/N_0 
< \hat{c}.
\]
Therefore Lemma~\ref{summation-lemma} applies. 

Direct calculations show that
\[
A_0 = \frac{F}{S} + \frac{F^2}{S^3} + O\biggl(\frac{F}{S^2} + \frac{F^3}{S^5}\biggr),
\qquad
A_0C_0 = - \frac{F^2}{S^3} +  O\biggl(\frac{F}{S^2} + \frac{F^3}{S^5}\biggr).
\]
Hence
\begin{align} 
A_2 - \dfrac{1}{2} A_2 C_1 &= \biggl( A_0 + O\biggl(\frac{\deltamax F}{S^2}\biggr)\biggr) 
\biggl( 1 - \dfrac{1}{2}\, C_0 + O\biggl(\frac{\deltamax}{S} + \frac{N_0  F}{S^3}\biggr)\biggr) \notag \\
&= A_0 - \dfrac{1}{2} A_0 C_0 + O\biggl( \frac{\deltamax F}{S^2} + \frac{F^3}{S^5}\biggr)\label{NX-step} \\
&= \frac{F}{S} + \frac{3F^2}{2S^3} + O\biggl(\frac{\deltamax F}{S^2} + \frac{F^3}{S^5}\biggr).\notag
\end{align}
The same expression holds for $A_1 - \dfrac{1}{2} A_1 C_2$, up to the stated error term.
Note that to obtain (\ref{NX-step}) in the case that $N_0 = \lceil \log S\rceil$, the additive error term $O(A_0N_0 F/S^3)$
is covered by $O(\deltamax F/S^2)$, using the fact that $\deltamax\geq \smax\tmax \geq 1$ for $S$ sufficiently large.

Since $A_2 C_1^2 = O(F^3/S^5)$, 
by combining the lower and upper bounds from Lemma~\ref{summation-lemma} we conclude that
\[
\sum_{f=0}^{N_0} \frac{|\mathcal{B}_{f}|}{|\mathcal{B}_{0}|} = \exp \biggl( \frac{F}{S} + \frac{3F^2}{2S^3} + O\biggl( \frac{\deltamax F}{S^2} + \frac{F^3}{S^5}\biggr)\biggr) + O\bigl((2e/41)^{N_0}\bigr).
\]
Finally, $(2e/41)^{N_0} \le (1/e^2)^{\log S} \le 1/S^2$. Since the sum we are estimating is at least equal to one, this additive error term can be brought inside the exponential, completing the proof.
\end{proof}

We can now prove the main result of this section.

\begin{proof}[Proof of Theorem~$\ref{thm:bip-avoid}$]
If $F=0$ then $\mathcal{B}(\svec,\tvec,X)=\mathcal{B}(\svec,\tvec)$ and the theorem is true.
So we can assume that $F>0$.
Lemma~\ref{initial-bipartite} implies that
\[ B(\svec,\tvec) = \bigl(1 + O(S^{-2})\bigr)\,
  \sum_{f=0}^{N_0} |\mathcal{B}_f|\]
since the sets $\mathcal{B}_f$ are disjoint. Combining this with Lemma~\ref{forbidden-sum-bipartite} gives
\[ B(\svec,\tvec)=|\mathcal{B}_0|\,
\exp \biggl( \frac{F}{S} + \frac{3F^2}{2S^3}+ O\biggl( \frac{\deltamax F}{S^2} + \frac{F^3}{S^5}\biggr)\biggr),
\]
since the term $O(S^{-2})$ is covered by $O(\deltamax F/S^2)$.
Recalling that $B(\svec,\tvec,X) = |\mathcal{B}_0|$,  the result follows.
\end{proof}

\subsection{Bipartite graph applications}

Given a bipartite graph $G\in \mathcal{B}(\svec,\tvec)$ which contains all edges of $X$, we may delete all edges of $X$ to obtain a graph $G'\in \mathcal{B}(\svec-\xvec,\tvec-\yvec,X)$. This operation is a bijection,
and hence
\begin{equation}
|\{ G\in \mathcal{B}(\svec,\tvec) \, : \, X\subseteq G\}| = B(\svec-\xvec,\tvec-\yvec,X).
\label{bijection}
\end{equation}
 Dividing the above by $B(\svec,\tvec)$, we obtain the probability $P(\svec,\tvec,X)$ that a uniformly random element of $B(\svec,\tvec)$ contains $X$ as a subgraph.
 This allows the calculation of expected values, after summing over relevant choices of $X$.

 McKay~\cite[Theorem~3.5]{McKay1981} gave deterministic (non-asymptotic) upper and lower bounds for $P(\svec,\tvec,X)$. This work has been updated recently by Larkin, McKay and Tian~\cite[Section~5]{LMT}.  An asymptotic expression for $P(\svec,\tvec,X)$ for dense degrees was given in~\cite[Theorem~2.2]{GMbip}.
 Liebenau and Wormald~\cite[Theorem~1.4]{LW2022} gave a very precise
 formula for the probability that a randomly chosen element of $B(\svec,\tvec)$
 contains a specified edge, in the near-regular case.

\begin{cor}
Let $X\subseteq U\times V$ be a 
bipartite graph on $U\cup V$
and define the parameter
\[ \widehat{F} = \widehat{F}(X) = \sum_{u_iv_j\in X} (s_i - x_i)(t_j-y_j), \]
where $(\xvec,\yvec)$ is the degree sequence of $X$.
Define $\hat{s}_{\max} = \max_{i\in [m]} (s_i-x_i)$, $\hat{t}_{\max} = \max_{j\in [n]} (t_j-y_j)$ and $\widehat S=S-\abs{X}$,
and let 
\[ \hat{\delta}_{\max} =\hat{s}_{\max}\hat{t}_{\max}+\hat{s}_{\max}\ymax+\xmax\hat{t}_{\max}. \]
Suppose that $\hat{s}_{\max} + \hat{t}_{\max} = o\bigl(\widehat S/\log \widehat S\bigr)$,
$\hat{\delta}_{\max} = o(\widehat S)$, $\hat{\delta}_{\max}\widehat{F} = o\bigl(\widehat S^{2}\bigr)$,
$\widehat{F} = o\bigl( \widehat S^{\,5/3}\bigr)$.
Then, as $\widehat S\to\infty$, the probability that a uniformly random element of $\mathcal{B}(\svec,\tvec)$ contains every edge of $X$ is
\[ \frac{B(\svec-\xvec,\tvec-\yvec)}{B(\svec,\tvec)}\, \exp\biggl( - \frac{\widehat{F}}{\widehat S} 
- \frac{3\widehat{F}^{2}}{2\widehat S^{3}} 
+ O\biggl(\frac{\hat{\delta}_{\max} \widehat{F}}{\widehat S^{2}} 
+ \frac{\widehat{F}^{3}}{\widehat S^{5}}\biggr)\biggr).\]
\end{cor}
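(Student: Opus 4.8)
The plan is to deduce the corollary directly from Theorem~\ref{thm:bip-avoid} combined with the bijection recorded in~\eqref{bijection}. That bijection---deleting the edges of $X$ from a graph $G\in\mathcal{B}(\svec,\tvec)$ that contains $X$---identifies the set $\{G\in\mathcal{B}(\svec,\tvec):X\subseteq G\}$ with $\mathcal{B}(\svec-\xvec,\tvec-\yvec,X)$, the set of graphs with the reduced degree sequence $(\svec-\xvec,\tvec-\yvec)$ that avoid every edge of $X$. Hence the probability in question equals $B(\svec-\xvec,\tvec-\yvec,X)/B(\svec,\tvec)$, and the whole problem reduces to estimating the numerator.

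First I would apply Theorem~\ref{thm:bip-avoid} with the \emph{same} forbidden graph $X$ but with the degree sequence $(\svec-\xvec,\tvec-\yvec)$ in place of $(\svec,\tvec)$. Under this substitution the quantities appearing in the theorem transform into their hatted counterparts: the total degree becomes $\widehat S=\sum_i(s_i-x_i)=S-\abs{X}$ (using $\sum_i x_i=\sum_j y_j=\abs{X}$, since $X$ has $\abs{X}$ edges), the maximum degrees become $\hat{s}_{\max}$ and $\hat{t}_{\max}$, the parameter $F$ becomes $\widehat{F}=\sum_{u_iv_j\in X}(s_i-x_i)(t_j-y_j)$, and $\deltamax$ becomes $\hat{\delta}_{\max}$. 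The degree sequence $(\xvec,\yvec)$ of $X$ itself is unchanged, so $\xmax$ and $\ymax$ remain as before. Consequently the four hypotheses of Theorem~\ref{thm:bip-avoid} for the reduced instance read exactly as $\hat{s}_{\max}+\hat{t}_{\max}=o(\widehat S/\log\widehat S)$, $\hat{\delta}_{\max}=o(\widehat S)$, $\hat{\delta}_{\max}\widehat{F}=o(\widehat S^2)$ and $\widehat{F}=o(\widehat S^{5/3})$, which are precisely the assumptions made in the corollary.

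With the hypotheses verified, Theorem~\ref{thm:bip-avoid} gives $B(\svec-\xvec,\tvec-\yvec,X)=B(\svec-\xvec,\tvec-\yvec)\exp\bigl(-\widehat{F}/\widehat S-3\widehat{F}^2/(2\widehat S^3)+O(\hat{\delta}_{\max}\widehat{F}/\widehat S^2+\widehat{F}^3/\widehat S^5)\bigr)$, and dividing by $B(\svec,\tvec)$ yields the claimed formula. The asymptotic regime is governed by $\widehat S\to\infty$ rather than $S\to\infty$, matching the statement of the corollary.

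I do not expect a genuine obstacle here: the content lies entirely in the careful translation of parameters and the observation that the corollary's hypotheses are exactly the hatted hypotheses of the theorem. The only points needing a word of care are the identity $\widehat S=S-\abs{X}$ and the implicit requirement $s_i\ge x_i$ and $t_j\ge y_j$ for all $i,j$, so that $(\svec-\xvec,\tvec-\yvec)$ is a legitimate nonnegative degree sequence and the containment $X\subseteq G$ is possible. If this fails for some vertex then no graph contains $X$, the probability is $0$, and there is nothing to prove.
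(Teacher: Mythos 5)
Your proposal is correct and follows exactly the route the paper takes: it invokes the bijection in \eqref{bijection} and applies Theorem~\ref{thm:bip-avoid} to $\mathcal{B}(\svec-\xvec,\tvec-\yvec,X)$, with the hatted parameters being precisely the theorem's quantities for the reduced degree sequence. Your extra remarks on $\widehat S=S-\abs{X}$ and the degenerate case $s_i<x_i$ or $t_j<y_j$ are sensible bookkeeping that the paper leaves implicit.
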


\begin{proof}
 The result follows using (\ref{bijection}), applying Theorem~\ref{thm:bip-avoid} to approximate the cardinality of $\mathcal{B}(\svec-\xvec,\tvec-\yvec,X)$.
\end{proof}

More general applications are also possible.
Let $X$ and $Z$ be disjoint subgraphs of the complete bipartite graph on $U\cup V$.
As usual we let $(\xvec,\yvec)$ denote the degree sequence of $X$ and define $(\wvec,\zvec)$ to be the degree sequence of $Z$.
Then the probability that a random chosen element of $\mathcal{B}(\svec,\tvec)$ contains every edge of $X$ and no edge of $Z$ is given by
\[ \frac{B(\svec-\xvec,\tvec-\yvec,X\cup Z)}{B(\svec,\tvec)}, \]
which can be approximated using Theorem~\ref{thm:bip-avoid} under the
required sparsity conditions on $\svec,\tvec,\xvec,\yvec,\wvec,\zvec$.

\section{Digraphs without loops}\label{s:digraphs}

We will make frequent uses of the undirected bipartite graph
representation of a digraph.
An example is shown in Figure~\ref{BipartiteFig}.
\begin{figure}[htb]
  \[ \includegraphics[scale=0.9]{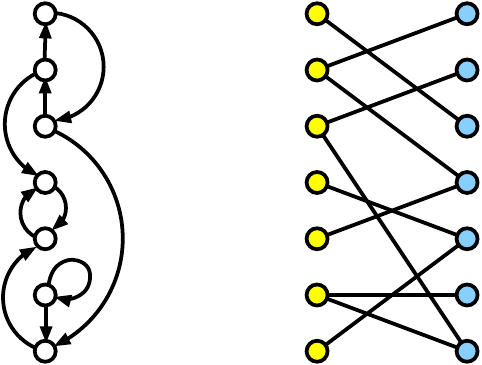} \]
   \caption{A digraph and its associated bipartite graph.}
   \label{BipartiteFig}
\end{figure}
Each vertex $w_i$ of a digraph provides two vertices
$u_i,v_i$ to its associated bipartite graph, while
each edge $w_iw_j$ of the digraph provides the
edge $u_iv_j$ to the bipartite graph.
Thus, a loop $w_iw_i$ in the digraph corresponds to an edge
$u_iv_i$ in the bipartite graph, while a 2-cycle
$w_i w_j w_i$ in the digraph corresponds to a pair of
edges $u_iv_j,u_jv_i$ in the bipartite graph.
Due to this correspondence, we will freely use the words
``loop'' and ``2-cycle'' when referring to the bipartite graph.

Using this bipartite representation of digraphs,
Greenhill and McKay~\cite[Theorem~3.1]{GMbip} gave a formula for the number of loop-free digraphs with specified degrees which avoid some set $X$ of specified edges, where the degrees are very dense and $|X|$ may be slightly superlinear in $n$. 
Liebenau and Wormald~\cite[Theorem~1.1]{LW2022} provided a formula for the number of loop-free digraphs with specified degrees, which holds for near-regular degree sequences of a wide range of densities. To the best of our knowledge, a formula for the number of loop-free digraphs 
with specified sparse degrees has not been written down, though it follows
as an easy corollary from McKay~\cite[Theorem 4.6]{silver}, for example.
In this section, we will apply Theorem~\ref{thm:bip-avoid} to present an asymptotic formula
for the number of sufficiently sparse loop-free digraphs with specified degrees.

To avoid trivial cases we will assume there are no isolated
vertices, which means that $s_i+t_i\ge 1$ for all~$i$.
This implies $S\ge \frac12 n$.
This assumption does not effect the validity of our
enumerations, since both the exact values and our
approximations will be independent of the addition
of isolated vertices.
Define
\begin{equation}\label{eq:W-def}
    W = \sum_{i\in [n]} s_it_i.
\end{equation}

\medskip

\begin{thm}\label{loopprob}
As $S\to\infty$, suppose that $\smax+\tmax=o(S/\log S)$
and $\smax\tmax W=o(S^2)$.
Then the probability that a random digraph with degrees $\svec,\tvec$ has no loops is
\[
   \exp\biggl( -\frac WS + O\biggl(\frac{\smax\tmax
    W}{S^2}\biggr)\biggr).
\]
\end{thm}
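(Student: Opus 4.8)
The plan is to use the bipartite encoding described in this section: a loop-free digraph with out-degree sequence $\svec$ and in-degree sequence $\tvec$ is precisely a bipartite graph in $\calB(\svec,\tvec)$ that contains no edge of the diagonal matching $X=\{u_iv_i : i\in[n]\}$. Consequently the probability that a uniformly random digraph with degrees $\svec,\tvec$ is loop-free equals $B(\svec,\tvec,X)/B(\svec,\tvec)$, and I would estimate this ratio by applying Theorem~\ref{thm:bip-avoid} to this particular $X$. For the diagonal matching we have $F(X)=\sum_{i\in[n]}s_it_i=W$; every vertex lies in at most one edge of $X$, so $\xmax=\ymax=1$ and hence $\deltamax=\smax\tmax+\smax+\tmax$. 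With these values the theorem yields
\[
\frac{B(\svec,\tvec,X)}{B(\svec,\tvec)}=\exp\biggl(-\frac WS-\frac{3W^2}{2S^3}+O\biggl(\frac{\deltamax W}{S^2}+\frac{W^3}{S^5}\biggr)\biggr).
\]

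The substance of the argument is then to show that everything except the leading term $-W/S$ is absorbed into the claimed error $O(\smax\tmax W/S^2)$. The elementary inequalities I would use are $W\le\smax S$ and $W\le\tmax S$ (so $W^2\le\smax\tmax S^2$) together with $\smax+\tmax\le2\smax\tmax$, valid once $\smax,\tmax\ge1$. These give $\deltamax=O(\smax\tmax)$, whence $\deltamax W/S^2=O(\smax\tmax W/S^2)$; moreover $\tfrac{3W^2}{2S^3}=\tfrac{W}{S}\cdot\tfrac{3W}{2S^2}\le\smax\cdot\tfrac{3W}{2S^2}=O(\smax\tmax W/S^2)$, and $W^3/S^5\le\smax\tmax S^2\cdot W/S^5=\smax\tmax W/S^3=O(\smax\tmax W/S^2)$. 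Three of the four hypotheses of Theorem~\ref{thm:bip-avoid} are also immediate from those of the present statement: $\smax+\tmax=o(S/\log S)$ is assumed directly; $\deltamax F=O(\smax\tmax W)=o(S^2)$; and $F=o(S^{5/3})$, since $W^2\le\smax\tmax S^2$ forces $W^3/S^2\le\smax\tmax W=o(S^2)$, i.e.\ $W=o(S^{4/3})$.

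The one hypothesis that needs care, and what I expect to be the main obstacle, is the remaining condition $\deltamax=o(S)$, which amounts to $\smax\tmax=o(S)$ and is \emph{not} forced by the stated assumptions: $\smax$ and $\tmax$ can each be of order $\sqrt S$ at different vertices while $W$ is only of order $\sqrt S$, so that $\smax\tmax=\Theta(S)$ yet $\smax\tmax W=O(S^{3/2})=o(S^2)$. I would therefore split on the size of $\smax\tmax$. When $\smax\tmax=o(S)$, Theorem~\ref{thm:bip-avoid} applies and the previous paragraph finishes the proof. When $\smax\tmax=\Omega(S)$, the assumption $\smax\tmax W=o(S^2)$ forces $W=o(S)$ and in fact $W=o(\smax\tmax)$, so loops are very rare and, crucially, the target error $\smax\tmax W/S^2$ dominates $W/S$. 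Here I would bypass Theorem~\ref{thm:bip-avoid} and estimate the no-loop probability by a direct first/second-moment computation, $\Pr[\text{no loops}]=1-\sum_{i\in[n]}\Pr[u_iv_i\in G]+O(W^2/S^2)$, where the second-order term is $o(\smax\tmax W/S^2)$ since $W=o(\smax\tmax)$. It then suffices to show $\sum_i\Pr[u_iv_i\in G]=W/S+O(\smax\tmax W/S^2)$, and because $\smax\tmax/S=\Omega(1)$ in this regime this needs only the crude per-edge estimate $\Pr[u_iv_i\in G]=(s_it_i/S)\bigl(1+O(\smax\tmax/S)\bigr)$, which I would obtain from the deterministic subgraph-probability bounds of McKay~\cite{McKay1981} and Larkin--McKay--Tian~\cite{LMT}.
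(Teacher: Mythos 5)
Your main argument is precisely the paper's proof: apply Theorem~\ref{thm:bip-avoid} to the diagonal matching with $F=W$ and $\xmax=\ymax=1$, then absorb $3W^2/(2S^3)$ and $W^3/S^5$ into $O(\smax\tmax W/S^2)$ using $W\le\smax S$ and $W\le\tmax S$ (hence $W^2\le\smax\tmax S^2$), exactly as you do.

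Where you depart from the paper is in flagging the hypothesis $\deltamax=o(S)$, and you are right that it is not implied by the stated assumptions: if the maximum out-degree and maximum in-degree occur at different vertices, one can have $\smax\tmax=\Omega(S)$ while $W$, and hence $\smax\tmax W/S^2$, stays small. The paper's proof applies Theorem~\ref{thm:bip-avoid} without checking this condition, so your case split addresses a real issue rather than an imagined one. However, your treatment of the regime $\smax\tmax=\Omega(S)$ is not yet a proof. The deterministic subgraph-probability bounds of \cite{McKay1981} and \cite{LMT} that you invoke have denominators of the form $S-O(\smax\tmax)$, so they are vacuous in exactly the regime where you need them; and since $\smax\tmax/S$ may tend to infinity there (e.g.\ $\smax=\tmax=S^{0.6}$ is compatible with $\smax+\tmax=o(S/\log S)$), the estimate $\Pr[u_iv_i\in G]=(s_it_i/S)\bigl(1+O(\smax\tmax/S)\bigr)$ amounts to the upper bound $\Pr[u_iv_i\in G]=O(s_it_i\smax\tmax/S^2)$, which still has to be established by some argument that survives $\smax\tmax\ge S$ --- the obvious single-edge switching also has its count of good forward choices degenerate to $S-O(\smax\tmax)$. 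Either supply such a bound, or add $\smax\tmax=o(S)$ as an explicit hypothesis, which is evidently what the paper's proof implicitly assumes.
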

\begin{proof}
  No loops are possible if $W=0$, so assume $W\ge 1$.
  Now we can apply Theorem~\ref{thm:bip-avoid}
  using $m=n$ and $F=W$.
  Since $\xmax=\ymax=1$, we have $\deltamax=\Theta(\smax\tmax)$.

  By the definition of $W$, we have $W\le \smax S$
  and $W\le\tmax S$, which together imply
  $W\le (\smax\tmax)^{1/2} S$.
  This gives $W^3/S^5= o(\smax\tmax W/S^2)=o(1)$,
  which satisfies the assumption $F=o(S^{5/3})$ of
  Theorem~\ref{thm:bip-avoid} and shows that we
  can discard the error term $O(F^3/S^5)$.
  Finally, $W^2/S^3 \le \smax\tmax W/S^2$ so the
  term $F^2/(2S^3)$ of Theorem~\ref{thm:bip-avoid}
  also lies within the error term.
\end{proof}
\medskip

\begin{cor}\label{loopfreethm}
   Suppose $S\to\infty$ and $\smax\tmax=o(S^{2/3})$.
   Let $R(\svec,\tvec)$ be
    the number of loop-free simple digraphs
   with degrees $\svec,\tvec$. Then
   \[
R(\svec,\tvec) = \frac{S!}{\prod_{i\in [n]} (s_i!\,t_i!)}
\exp\biggl( Q(\svec,\tvec)
- \frac{W}{S}
+ O\biggl(\frac{\smax^3\tmax^3}{S^2} +
    \frac{\smax\tmax W}{S^2}\biggr)\biggr).
\]
In particular, if $1\le d=o(n^{1/2})$ then the number of
loop-free regular digraphs of in-degree and out-degree $d$ is
\[
    \frac{(dn)!}{(d!)^{2n}}
    \exp\biggl( -\frac{d^2+1}{2} - \frac{d^3}{6n}
        + O\biggl(\frac{d^2}{n}\biggr)\biggr).
\]
\end{cor}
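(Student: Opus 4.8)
The plan is to use the bipartite model of digraphs from Section~\ref{s:digraphs}: with $m=n$, a loop-free digraph with degree sequence $(\svec,\tvec)$ is precisely a bipartite graph in $\calB(\svec,\tvec)$ that avoids every edge of the loop-matching $X=\{u_iv_i \st i\in[n]\}$. Thus $R(\svec,\tvec)=B(\svec,\tvec,X)$, and I would factor this as
\[ R(\svec,\tvec)=B(\svec,\tvec)\cdot\frac{B(\svec,\tvec,X)}{B(\svec,\tvec)}, \]
estimating the first factor by Theorem~\ref{GMW} and the second factor --- which is exactly the probability that a uniformly random element of $\calB(\svec,\tvec)$ has no loops --- by Theorem~\ref{loopprob}. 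For the loop-matching $X$ we have $F=\sum_{i}s_it_i=W$ and $\xmax=\ymax=1$. Multiplying the two formulas and using $\prod_{i}s_i!\,\prod_j t_j!=\prod_i(s_i!\,t_i!)$, valid since $m=n$, produces the claimed expression, with the two error terms $O(\smax^3\tmax^3/S^2)$ and $O(\smax\tmax W/S^2)$ combining into the stated bound.

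Before combining, I would confirm that $\smax\tmax=o(S^{2/3})$ supplies the hypotheses of both ingredients. Theorem~\ref{GMW} requires exactly $S\to\infty$ and $\smax\tmax=o(S^{2/3})$. For Theorem~\ref{loopprob}, note that $S\to\infty$ forces $\smax,\tmax\ge 1$ for large $S$, so $\smax+\tmax\le 2\smax\tmax=o(S^{2/3})=o(S/\log S)$; moreover the bounds $W\le\smax S$ and $W\le\tmax S$ give $W\le(\smax\tmax)^{1/2}S$, whence $\smax\tmax W\le(\smax\tmax)^{3/2}S=o(S^2)$, as needed.

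For the regular specialization I would substitute $s_i=t_i=d$ and $m=n$, so that $S=dn$, $S_b=T_b=n(d)_b$, and $W=nd^2$. The hypothesis holds because $d=o(n^{1/2})$ gives $\smax\tmax/S^{2/3}=d^2/(dn)^{2/3}=(d^2/n)^{2/3}\to 0$. The prefactor collapses to $(dn)!/(d!)^{2n}$ at once, so it remains to expand $Q(\svec,\tvec)-W/S$. Here $-W/S=-d$, and the only term of $Q$ that does not carry a factor $1/n$ is $-S_2T_2/(2S^2)=-(d-1)^2/2$, so the constant part is $-(d-1)^2/2-d=-(d^2+1)/2$. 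Every remaining term of $Q$ reduces to a polynomial in $d$ divided by $dn$; collecting the coefficient of $d^3/n$ from the three terms whose numerator has degree four in $d$ gives $\tfrac13-1+\tfrac12=-\tfrac16$, which yields $-d^3/(6n)$, while all lower-order-in-$d$ pieces are $O(d^2/n)$. Finally the general error becomes $O(d^4/n^2+d^2/n)=O(d^2/n)$, since $d^4/n^2=(d^2/n)^2=o(d^2/n)$.

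The only genuine work is the bookkeeping in this last expansion: after simplifying the five terms of $Q$ that carry a factor $1/n$ to the form (polynomial in $d$)$/(dn)$, one must verify that exactly three of them have a nonzero $d^4$ numerator coefficient, that these coefficients sum to $-\tfrac16$, and that every other contribution --- together with the $\smax^3\tmax^3/S^2$ error from Theorem~\ref{GMW} --- is absorbed by $O(d^2/n)$. I expect this to be the main obstacle: it is not conceptually deep, but because $d$ may grow with $n$ one must track carefully which powers of $d$ survive division by $dn$ rather than treating $d$ as constant.
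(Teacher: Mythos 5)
Your proposal is correct and follows exactly the paper's route: express $R(\svec,\tvec)$ as $B(\svec,\tvec)$ times the no-loop probability, apply Theorem~\ref{GMW} and Theorem~\ref{loopprob}, and note that $\smax\tmax=o(S^{2/3})$ implies both $\smax+\tmax=o(S/\log S)$ and $\smax\tmax W/S^2=o(1)$. Your expansion of $Q$ in the regular case (coefficients $\tfrac13-1+\tfrac12=-\tfrac16$ for $d^3/n$, constant $-(d-1)^2/2-d=-(d^2+1)/2$, and $d^4/n^2=o(d^2/n)$) checks out; the paper simply omits this routine computation.
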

\begin{proof}
  This follows from Theorem~\ref{GMW} and
  Theorem~\ref{loopprob}, after noting that
  $\smax\tmax=o(S^{2/3})$ implies
  $\smax\tmax W/S^2=o(1)$ and $\smax+\tmax=o(S/\log S)$.
\end{proof}

The regular case of Corollary~\ref{loopfreethm} follows
from Hasheminezhad and McKay~\cite[Lemma~3.6]{HM23} with the weaker error term $O(d/n^{1/2})$.
Liebenau and Wormald~\cite[Theorem~1.1]{LW2022} obtained an
estimate for $R(\svec,\tvec)$ when the degrees
were not too far from equal and not very small
or very large.

\begin{cor}
Let $X$ be a loop-free digraph on the vertex set $\{ w_1,\ldots, w_n\}$ with degree sequence $(\xvec,\yvec)$.  Define
\[ F = \sum_{(w_i,w_j)\in X} s_it_j,\qquad
  \deltamax = \smax\tmax + \smax \ymax + \xmax\tmax\]
  and define $W$ as in $(\ref{eq:W-def})$.
Suppose that $\smax + \tmax = o(S/\log S)$, $\smax\tmax = o(S^{2/3})$, $\deltamax = o(S)$, $\deltamax (F + W)  = o(S^2)$, $F= o(S^{3/5})$.
Then the number of loop-free digraphs with degrees $(\svec$, $\tvec)$ which do not contain any edge from $X$ is given by
\[ R(\svec,\tvec)\, \exp\biggl( - \frac{F}{S} - \frac{3F^2}{2S^3} + O\biggl(\frac{\smax^3\tmax^3}{S^2} + \frac{\deltamax(F + W)}{S^2} + \frac{F^2(F + W)}{S^5} \biggr)\biggr).\]
\end{cor}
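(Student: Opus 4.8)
The plan is to reduce to the bipartite setting, apply Theorem~\ref{thm:bip-avoid}, and then pass from bipartite graphs back to loop-free digraphs using Corollary~\ref{loopfreethm}. Under the bipartite representation, a digraph on $\{w_1,\ldots,w_n\}$ with degree sequence $(\svec,\tvec)$ corresponds to a bipartite graph in $\calB(\svec,\tvec)$, the loop $w_iw_i$ corresponding to the matching edge $u_iv_i$ and the edge $(w_i,w_j)$ of $X$ corresponding to $u_iv_j$. Writing $M=\{u_iv_i:i\in[n]\}$ for the loop-matching and identifying $X$ with its bipartite image, a digraph is loop-free and $X$-avoiding exactly when its bipartite graph avoids every edge of $Y:=M\cup X$. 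Since $X$ is loop-free, $M$ and $X$ are disjoint, so the number we want is precisely $B(\svec,\tvec,Y)$.

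First I would record the parameters of $Y$. Its $F$-parameter is $F(Y)=\sum_{u_iv_j\in M}s_it_j+\sum_{u_iv_j\in X}s_it_j=W+F$, and its bipartite degree sequence assigns degree $1+x_i$ to $u_i$ and $1+y_j$ to $v_j$, so that $\deltamax(Y)=\smax\tmax+\smax(1+\ymax)+(1+\xmax)\tmax=\deltamax+\smax+\tmax$. Since $S\to\infty$ forces $\smax,\tmax\ge1$, we have $\smax+\tmax\le2\smax\tmax\le2\deltamax$, hence $\deltamax(Y)=O(\deltamax)$. I would then check the hypotheses of Theorem~\ref{thm:bip-avoid} for $Y$: the condition $\smax+\tmax=o(S/\log S)$ is assumed; $\deltamax(Y)=O(\deltamax)=o(S)$; $\deltamax(Y)F(Y)=O(\deltamax(F+W))=o(S^2)$; and $F(Y)=F+W=o(S^{5/3})$, since $W\le\sqrt{\smax\tmax}\,S=o(S^{4/3})$ and $F=o(S^{3/5})$. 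Theorem~\ref{thm:bip-avoid} then gives $B(\svec,\tvec,Y)=B(\svec,\tvec)\exp\bigl(-(F+W)/S-3(F+W)^2/(2S^3)+O(\deltamax(F+W)/S^2+(F+W)^3/S^5)\bigr)$.

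The next step is to replace $B(\svec,\tvec)$ by $R(\svec,\tvec)$, and this is where the hypothesis $\smax\tmax=o(S^{2/3})$ is used. Theorem~\ref{GMW} and Corollary~\ref{loopfreethm} both apply and share the leading factor $S!/\prod_{i\in[n]}(s_i!\,t_i!)$, so dividing them yields $B(\svec,\tvec)=R(\svec,\tvec)\exp\bigl(W/S+O(\smax^3\tmax^3/S^2+\smax\tmax W/S^2)\bigr)$; this is the source of the $\smax^3\tmax^3/S^2$ error term. Substituting this into the previous display and cancelling the two $W/S$ contributions leaves $B(\svec,\tvec,Y)=R(\svec,\tvec)\exp\bigl(-F/S-3(F+W)^2/(2S^3)+O(\mathrm{err})\bigr)$, where the accumulated error is $O(\smax^3\tmax^3/S^2+\smax\tmax W/S^2+\deltamax(F+W)/S^2+(F+W)^3/S^5)$.

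The main work, and the step I expect to be fiddliest, is the final reconciliation of the error terms, because expanding $-3(F+W)^2/(2S^3)=-3F^2/(2S^3)-3FW/S^3-3W^2/(2S^3)$ produces cross terms sitting in the exponent itself that must be shown to be absorbed by the claimed error. The two inequalities $W\le\sqrt{\smax\tmax}\,S\le\deltamax S$ and $\smax\tmax\le\deltamax$ (valid since $\smax,\tmax\ge1$) do all the work: from $FW/(F+W)\le\min(F,W)\le W\le\deltamax S$ one gets $FW\le\deltamax S(F+W)$, so $FW/S^3\le\deltamax(F+W)/S^2$; from $W\le\deltamax S$ and $F+W\ge W$ one gets $W^2\le\deltamax S(F+W)$, so $W^2/S^3\le\deltamax(F+W)/S^2$; and $\smax\tmax W/S^2\le\deltamax(F+W)/S^2$ since $\smax\tmax\le\deltamax$ and $W\le F+W$. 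Finally $(F+W)^3\le4(F^3+W^3)$, with $F^3/S^5\le F^2(F+W)/S^5$ and (using $W^2\le\deltamax S^2$) $W^3/S^5\le\deltamax S^2(F+W)/S^5\le\deltamax(F+W)/S^2$, so $(F+W)^3/S^5$ collapses into $O(F^2(F+W)/S^5+\deltamax(F+W)/S^2)$. Collecting everything leaves exactly $R(\svec,\tvec)\exp\bigl(-F/S-3F^2/(2S^3)+O(\smax^3\tmax^3/S^2+\deltamax(F+W)/S^2+F^2(F+W)/S^5)\bigr)$, as claimed.
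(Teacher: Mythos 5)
Your proposal is correct and follows essentially the same route as the paper: the paper's (very terse) proof likewise applies Theorem~\ref{thm:bip-avoid} to $X\cup\{u_iv_i:i\in[n]\}$ and then converts $B(\svec,\tvec)$ to $R(\svec,\tvec)$ via the loop-probability ratio, leaving the error-term absorption implicit with ``arguing as in the proof of Theorem~\ref{loopprob}.'' Your explicit bookkeeping (in particular $W\le\sqrt{\smax\tmax}\,S$, $\smax\tmax\le\deltamax$, and the absorption of the cross terms $FW/S^3$ and $W^2/S^3$) correctly fills in exactly the details the paper omits.
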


\begin{proof}
Note that Theorem~\ref{loopprob} gives the ratio
$R(\svec,\tvec)/B(\svec,\tvec)$.
Using this, the result follows by applying Theorem~\ref{thm:bip-avoid} with parameters $\svec$, $\tvec$ and $X\cup \{ u_iv_i \, : \, i\in [n]\}$, arguing as in the proof of Theorem~\ref{loopprob}.
\end{proof}

\section{Permanents of random 0-1 matrices}\label{s:permanents}

In this section we apply our results to determine the expected
permanent of a random matrix with given row and column sums,
which is equivalent to the expected number of perfect matchings
of a random balanced bipartite graph.
In the regular case (where all the row and column sums are
equal), we combine our calculations with previous work to
cover all densities.

For the sparse range which is our primary focus, milestones
include  O'Neil~\cite{ONeil} in the regular case
for row sums up to $(\log n)^{1/4-\eps}$ and
Bollob\'as and McKay~\cite{Boll} for row sums up to
$(\log n)^{1/3}$ including the irregular case.
The irregular case for row sums $o(n^{1/3})$
follows from McKay~\cite{silver} but is not stated explicitly there.

The quantity $R(\svec,\tvec)$ defined in Corollary~\ref{loopfreethm} can be interpreted as the number of
square 0-1 matrices with row sums $\svec$, column sums $\tvec$,
and zero trace.
Note that $S,S_2,S_3,T_2,T_3,W$ are all functions of $\svec,\tvec$.
If instead we want all diagonal entries to be equal to~1,
the count is $R(\svec{-}\jvec,\tvec{-}\jvec)$,
where $\jvec=(1,\ldots,1)$, provided
$\svec$ and $\tvec$ have no zero entries.

We will need the following lemma.
\begin{lemma}[{\cite[Lemma~3.1]{GIM}}]\label{GIMlemma}
  Suppose $u,v:\{1,\ldots,n\}\to\Reals$.
  Define the function $\Psi(\sigma)=\sum_{j\in [n]} u(j)v(\sigma_j)$
  for permutations $\sigma\in S_n$.
  Let $\X$ be a uniformly random permutation in~$S_n$.
  Define
  \begin{gather*}
     \bar u = \Dfrac1n\sum_{j\in [n]} u(j), \qquad
     \bar v = \Dfrac1n\sum_{j\in [n]} v(j); \\
     \alpha = \bigl(\max_j u(j)-\min_j u(j)\bigr)
              \bigl(\max_j v(j)-\min_j v(j)\bigr).
  \end{gather*}
\end{lemma}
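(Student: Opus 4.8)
The plan is to pin down the distribution of $\Psi(\X)$ through its first two moments together with a concentration bound, with $\alpha$ playing the role of the Lipschitz constant of $\Psi$ under a single transposition (the appearance of $\alpha$ as a product of ranges is the tell-tale sign that a bounded-difference argument is intended). First I would center the data: writing $a(j)=u(j)-\bar u$ and $b(j)=v(j)-\bar v$, so that $\sum_j a(j)=\sum_j b(j)=0$, the identity $\Psi(\sigma)=n\bar u\bar v+\sum_{j} a(j)\,b(\sigma_j)$ follows by expanding the product, since the two cross terms vanish once $u$ and $v$ are centered. Because $\X_j$ is uniform on $[n]$ for each fixed $j$, this immediately gives $\E\Psi(\X)=n\bar u\bar v$.

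For the variance I would expand $\E\bigl[(\sum_j a(j)b(\X_j))^2\bigr]=\sum_{j,k}a(j)a(k)\,\E[b(\X_j)b(\X_k)]$ and use the two elementary facts that $\E[b(\X_j)^2]=\frac{1}{n}\sum_l b(l)^2$ and, for $j\neq k$, $\E[b(\X_j)b(\X_k)]=-\frac{1}{n(n-1)}\sum_l b(l)^2$, the latter because $\sum_l b(l)=0$. Using $\sum_{j\neq k}a(j)a(k)=-\sum_j a(j)^2$, the diagonal and off-diagonal contributions combine to the clean formula $\Var\Psi(\X)=\frac{1}{n-1}\bigl(\sum_j a(j)^2\bigr)\bigl(\sum_l b(l)^2\bigr)$. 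A Popoviciu-type bound $\sum_j a(j)^2\le \frac{n}{4}(\max_j u(j)-\min_j u(j))^2$ (and likewise for $b$) then controls this by $O(n\alpha^2)$, which is the scale one expects.

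The concentration statement is where the real work lies, because the coordinates $\X_1,\ldots,\X_n$ are dependent. The key structural observation is that composing $\sigma$ with a transposition $(p\,q)$ changes only two terms and alters $\Psi$ by $(u(p)-u(q))(v(\sigma_q)-v(\sigma_p))$, whose absolute value is at most $\alpha$. I would therefore build the Doob martingale that exposes the images $\X_1,\X_2,\ldots$ one at a time and show each martingale difference is $O(\alpha)$ in magnitude -- the standard device is that any two completions of a partial assignment differ by a product of disjoint transpositions, each contributing at most $\alpha$ -- and then apply Azuma--Hoeffding. This would yield a sub-Gaussian tail of the form $\Pr[\abs{\Psi(\X)-n\bar u\bar v}\ge t]\le 2\exp\bigl(-\Theta(t^2/(n\alpha^2))\bigr)$, or equivalently a moment-generating-function estimate of the same flavour suitable for averaging $\exp$ of a $\Psi$-type sum, as needed in the permanent application.

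The main obstacle is precisely this last step: obtaining a clean bounded-difference estimate for a function of a uniform permutation with the right dependence on $\alpha$ and $n$. I expect to route around the dependence between permutation coordinates either by the martingale argument sketched above or by invoking an off-the-shelf McDiarmid-type inequality for random permutations; the delicate point is verifying that the per-step differences are genuinely bounded by a constant multiple of $\alpha$ uniformly over the conditioning, rather than only on average. Everything else -- the mean and the exact variance -- is a short and essentially mechanical second-moment computation.
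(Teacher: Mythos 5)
Your computations of the mean and the variance are correct and match the standard derivation: centering $u$ and $v$, using $\E[b(\X_j)b(\X_k)]=-\frac{1}{n(n-1)}\sum_l b(l)^2$ for $j\neq k$, and combining the diagonal and off-diagonal sums does give exactly $\Var\Psi(\X)=\frac{1}{n-1}\sum_j(u(j)-\bar u)^2\sum_k(v(k)-\bar v)^2$. (Note that the paper does not prove this lemma at all; it is quoted verbatim from \cite[Lemma~3.1]{GIM}, so the comparison here is really against the proof in that reference.)

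The genuine gap is in the third conclusion, which is the substantive content of the lemma. The statement is not a concentration inequality: it asserts the two-sided identity $\E e^{\Psi(\X)}=\exp\bigl(\E\Psi(\X)+\frac12\Var\Psi(\X)+K\bigr)$ with $\abs K\le\frac32 n\alpha^3+11n\alpha^4$, i.e.\ the log-moment-generating function equals the mean plus \emph{exactly half the variance}, with an error that is cubic and quartic in $\alpha$. An Azuma--Hoeffding or McDiarmid argument for permutations, of the kind you propose, can only deliver a bound of the form $\log\E e^{\Psi(\X)}\le\E\Psi(\X)+Cn\alpha^2$ for some unspecified constant $C$; since $\Var\Psi(\X)$ is itself of order $n\alpha^2$, such a bound has an error of the same order as the term you need to identify, and it gives no matching lower bound at all. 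To prove the lemma you would instead need something like a cumulant (Edgeworth-type) expansion of $\log\E e^{\Psi(\X)}$ showing that all cumulants of order $k\ge3$ are $O(n\alpha^k)$, or a martingale decomposition in which the conditional variance at each step is computed exactly (so that the quadratic term sums to $\frac12\Var\Psi(\X)$ precisely) and only the third- and fourth-order conditional moments are estimated crudely. That finer analysis, which is what produces the explicit constants $\frac32$ and $11$, is the heart of the proof in \cite{GIM} and is absent from your sketch; the bounded-difference route you flag as the ``main obstacle'' cannot be made to work as stated.
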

Then
\begin{align*}
  \E \Psi(\X) &= n\bar u\bar v, \\
  \Var \Psi(\X) &= \Dfrac{1}{n-1}
    \sum_{j\in [n]} (u(j)-\bar u)^2\sum_{k\in [n]} (v(k)-\bar v)^2, \\
  \E e^{\Psi(\X)} &= \exp\bigl(\E\Psi(\X)+\dfrac12\Var\Psi(\X)+K\bigr) \\
  &\kern5em\text{for some $K$ with 
  $\abs K\le \dfrac32n\alpha^3 + 11n\alpha^4$.}    
\end{align*}

\begin{thm}\label{permanent}
   Assume that $\svec$ and $\tvec$ have no zero entries,
   that $S\ge (1+\delta) n$ for some fixed $\delta>0$,
   and that $\smax\tmax=o(S^{2/3})$.
   Then the expected permanent of an $n\times n$
   random 0-1 matrix with row
   sums $\svec$ and column sums $\tvec$ is
   \[
       \frac{\prod_{j\in [n]} (s_jt_j)}{\binom{S}{n}}
       \exp\biggl(-\frac{S-n}{n} + Q(\svec{-}\jvec,\tvec{-}\jvec) - Q(\svec,\tvec)
         + O\biggl(\frac{\smax^{3/2}\tmax^{3/2}}{S}\biggr)
         \biggr).
   \]
\end{thm}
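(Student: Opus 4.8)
The plan is to write the expected permanent as an average over uniformly random permutations $\sigma\in S_n$ of a subgraph-containment probability, and then to evaluate that average with Lemma~\ref{GIMlemma}. Since the permanent of a $0$-$1$ matrix counts the perfect matchings of the associated bipartite graph, linearity of expectation gives $\E[\operatorname{perm}] = \sum_{\sigma\in S_n}\Pr[X_\sigma\subseteq G]$, where $G$ is uniform in $\mathcal{B}(\svec,\tvec)$ and $X_\sigma = \{u_jv_{\sigma_j}:j\in[n]\}$ is a perfect matching. Each $X_\sigma$ has degree sequence $(\jvec,\jvec)$, so by the bijection \eqref{bijection} we have $\Pr[X_\sigma\subseteq G] = B(\svec-\jvec,\tvec-\jvec,X_\sigma)/B(\svec,\tvec)$. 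First I would apply Theorem~\ref{thm:bip-avoid} to the numerator with degree sequence $(\svec-\jvec,\tvec-\jvec)$, forbidden graph $X_\sigma$, and parameters $\widehat S = S-n$ and $\widehat F_\sigma = \sum_{j}(s_j-1)(t_{\sigma_j}-1)$. Here $\xmax=\ymax=1$ and the reduced maximum degrees are at most $\smax,\tmax$, so the hypotheses of Theorem~\ref{thm:bip-avoid} hold uniformly in $\sigma$, using $\smax\tmax=o(S^{2/3})$ together with $\widehat S=\Theta(S)$ (which follows from $S\ge(1+\delta)n$).

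The prefactor $B(\svec-\jvec,\tvec-\jvec)/B(\svec,\tvec)$ is independent of $\sigma$, so I would evaluate it once using Theorem~\ref{GMW}. Since $\prod_j s_j!/\prod_j(s_j-1)! = \prod_j s_j$ and similarly for $\tvec$, while $n!\,(S-n)!/S! = 1/\binom{S}{n}$, this prefactor times $n!$ produces precisely $\prod_{j\in[n]}(s_jt_j)/\binom{S}{n}$ and the difference $Q(\svec-\jvec,\tvec-\jvec)-Q(\svec,\tvec)$; the Theorem~\ref{GMW} error $O(\smax^3\tmax^3/S^2)$ is dominated by $O(\smax^{3/2}\tmax^{3/2}/S)$ because $(\smax\tmax)^{3/2}=o(S)$. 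It then remains to estimate $\E_\sigma\bigl[\exp(-\widehat F_\sigma/\widehat S - 3\widehat F_\sigma^2/(2\widehat S^3) + O(\hat{\delta}_{\max}\widehat F_\sigma/\widehat S^2 + \widehat F_\sigma^3/\widehat S^5))\bigr]$, where $\hat{\delta}_{\max}=\Theta(\smax\tmax)$ is the parameter $\deltamax$ of the reduced problem. The uniform bound $\widehat F_\sigma\le\sqrt{\smax\tmax}\,\widehat S$ (from $\widehat F_\sigma\le(\smax-1)\widehat S$ and $\widehat F_\sigma\le(\tmax-1)\widehat S$) shows that the quadratic term and both Theorem~\ref{thm:bip-avoid} error terms are each $O(\smax^{3/2}\tmax^{3/2}/S)=o(1)$ uniformly over $\sigma$; being deterministic bounds, they factor out of the expectation, leaving $\E_\sigma[e^{-\widehat F_\sigma/\widehat S}]$ times a factor $\exp(O(\smax^{3/2}\tmax^{3/2}/S))$.

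Finally I would apply Lemma~\ref{GIMlemma} with $u(j)=s_j-1$ and $v(j)=-(t_j-1)/\widehat S$, so that $\Psi(\sigma)=-\widehat F_\sigma/\widehat S$. The mean contribution is $\E\Psi=n\bar u\bar v=-(S-n)/n$, which is exactly the leading exponent in the claimed formula. The main obstacle is the bookkeeping confirming that every remaining term is swallowed by the single error term $O(\smax^{3/2}\tmax^{3/2}/S)$. For the variance, $u(j)-\bar u=s_j-S/n$ and $v(j)-\bar v=-(t_j-S/n)/\widehat S$, so $\tfrac12\Var\Psi = O(\smax\tmax/n)$ after the elementary bounds $S_2\le\smax S$ and $T_2\le\tmax S$; this fits inside $O(\smax^{3/2}\tmax^{3/2}/S)$ precisely because $S/n\le\sqrt{\smax\tmax}$, which holds since both $\smax$ and $\tmax$ are at least the common mean $S/n$. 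For the Lemma~\ref{GIMlemma} error term, $\alpha\le\smax\tmax/\widehat S$ yields $n\alpha^3,\,n\alpha^4 = o(\smax^{3/2}\tmax^{3/2}/S)$ under $\smax\tmax=o(S^{2/3})$. Assembling the prefactor, the mean term, and these error contributions gives the stated formula.
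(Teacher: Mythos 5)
Your proposal is correct and follows essentially the same route as the paper: the paper also reduces the permanent to a sum over transversals of containment probabilities (phrased via the loop-free digraph count $R(\svec{-}\jvec,\tvec^\sigma{-}\jvec)$, which is the same quantity as your $B(\svec{-}\jvec,\tvec{-}\jvec,X_\sigma)$), kills the $F^2$ and error terms with the same uniform bound $F\le\smax^{1/2}\tmax^{1/2}S$, and applies Lemma~\ref{GIMlemma} to the same functional $-\widehat F_\sigma/\widehat S$ (with $u,v$ scaled symmetrically by $1/\sqrt{S-n}$ rather than putting the whole factor on $v$, which makes no difference to $\alpha$). Your bookkeeping of the mean, variance, and $K$ terms matches the paper's.
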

\begin{proof}
  As noted above, the probability that a random matrix has only
  1s on the diagonal is $R(\svec{-}\jvec,\tvec{-}\jvec)/B(\svec,\tvec)$.
  Noting that $B(\svec,\tvec^\sigma)$ is independent of $\sigma$,
  where $\tvec^\sigma=(t_{\sigma_1},\ldots,t_{\sigma_n})$,
  the probability that there are only 1s on some other transversal
  $\{(i,\sigma_i)\}$ is
  $R(\svec{-}\jvec,\tvec^\sigma{-}\jvec)/B(\svec,\tvec)$.
  Since $Q(\svec,\tvec^\sigma)$ is independent of
  $\sigma$, we only need to contend with the term
  $-W/S$ in Corollary~\ref{loopfreethm}, plus the error term
  that contains~$W$.
  We will remove the latter problem by applying the uniform bound $W\le\smax^{1/2}\tmax^{1/2}S$.
  Applying Theorem~\ref{GMW} and Corollary~\ref{loopfreethm},
  we now have that the probability that a random matrix has
  only 1s on the transversal $\{(i,\sigma_i)\}$ is
  \begin{align*}
        &\frac{(S-n)!\prod_{j\in [n]} (s_jt_j)}{S!}
       \exp\biggl(Q(\svec{-}\jvec,\tvec{-}\jvec) - Q(\svec,\tvec)
         + \phi(\sigma)         
         + O\biggl(\frac{\smax^{3/2}\tmax^{3/2}}{S}\biggr)
         \biggr), \\
       & \kern3em\text{where~~}
         \phi(\sigma) = -\frac{W(\svec{-}\jvec,\tvec^\sigma{-}\jvec)}{S-n}.
  \end{align*}

  We now apply Lemma~\ref{GIMlemma} to estimate $\E e^{\phi(\X)}$
  when $\X$ is a random permutation.
  Let $u(j)=-\beta (s_j-1)$ and $v(j)=\beta (t_j-1)$,
  where $\beta=1/\sqrt{S-n}$.
  We have $\bar u=-\beta(S/n-1)$ and $\bar v=\beta (S/n-1)$,
  so $\E \phi(\X)=-\beta^2 n (S/n-1)^2 = -(S-n)/n$.
  We also have $\Var \phi(\X)=O(\smax\tmax/n)=O(\smax^{3/2}\tmax^{3/2}/S)$.
  In addition, $\alpha\le\beta^2\smax\tmax=\smax\tmax/(S-n)$,
  so the error term $K$ in Lemma~\ref{GIMlemma} also fits into our
  error term.
  Consequently, 
  $\E e^{\phi(\X)}$ is equal to $e^{-(S-n)/n}$ within our existing error term.
  Summing over all
  $n!$ permutations $\sigma$ completes the proof.
\end{proof}

We can also estimate the permanents of extremely dense 0-1 matrices.

\begin{thm}\label{complthm}
  Assume $\smax\tmax=o(S^{2/3})$ and $S=\Omega(n)$.
  Then the expected permanent of an $n\times n$ random 0-1 matrix with
  row sums $(n-s_1,\ldots,n-s_n)$ and column sums
  $(n-t_1,\ldots,n-t_n)$ is
  \[
      n!\,\exp\biggl( - \frac{S}{n} 
         + O\biggl(\frac{\smax^{3/2}\tmax^{3/2}}{S}\biggr) \biggr).
  \]
\end{thm}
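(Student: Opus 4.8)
The plan is to reduce the expected permanent to a sum over transversals of avoidance probabilities, apply Theorem~\ref{thm:bip-avoid} to each term, and then average over transversals using Lemma~\ref{GIMlemma}, paralleling the proof of Theorem~\ref{permanent}. First I would identify a uniformly random $0$-$1$ matrix with row sums $(n-s_1,\ldots,n-s_n)$ and column sums $(n-t_1,\ldots,n-t_n)$ with $J-A$, where $J$ is the all-ones matrix and $A$ is the adjacency matrix of a uniformly random $G\in\mathcal{B}(\svec,\tvec)$ on $U\cup V$ with $\abs U=\abs V=n$; complementation is a measure-preserving bijection. For $\sigma\in S_n$ let $X_\sigma=\{u_iv_{\sigma_i}:i\in[n]\}$ be the corresponding perfect matching. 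Then the permanent $\operatorname{per}(J-A)$ counts the transversals $\sigma$ with $(J-A)_{i,\sigma_i}=1$ for all $i$, that is, those for which $G$ contains no edge of $X_\sigma$. Exchanging expectation and summation gives
\[
 \E\operatorname{per}(J-A)=\sum_{\sigma\in S_n}\frac{B(\svec,\tvec,X_\sigma)}{B(\svec,\tvec)}.
\]

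Next I would estimate each ratio with Theorem~\ref{thm:bip-avoid}. Because $X_\sigma$ is a perfect matching, $\xmax=\ymax=1$ and hence $\deltamax=\Theta(\smax\tmax)$, while $F=F(\sigma)=\sum_{i\in[n]}s_it_{\sigma_i}$. The pointwise bounds $F(\sigma)\le\smax S$ and $F(\sigma)\le\tmax S$ yield $F(\sigma)\le\smax^{1/2}\tmax^{1/2}S$ for every $\sigma$, which together with $\smax\tmax=o(S^{2/3})$ verifies all four hypotheses of Theorem~\ref{thm:bip-avoid}, exactly as in Corollary~\ref{loopfreethm}. The same bound shows that the terms $3F^2/(2S^3)$, $\deltamax F/S^2$ and $F^3/S^5$ are each $O(\smax^{3/2}\tmax^{3/2}/S)$ uniformly in $\sigma$. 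Therefore
\[
 \frac{B(\svec,\tvec,X_\sigma)}{B(\svec,\tvec)}
 =\exp\biggl(-\frac{F(\sigma)}{S}+O\biggl(\frac{\smax^{3/2}\tmax^{3/2}}{S}\biggr)\biggr)
\]
uniformly in $\sigma$; since this error is $o(1)$ it may be pulled outside the sum, leaving $\E\operatorname{per}(J-A)=\bigl(1+O(\smax^{3/2}\tmax^{3/2}/S)\bigr)\,n!\,\E e^{\Psi(\X)}$, where $\X$ is a uniformly random permutation and $\Psi(\sigma)=-F(\sigma)/S$.

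Finally I would apply Lemma~\ref{GIMlemma} with $u(j)=-\beta s_j$ and $v(j)=\beta t_j$, where $\beta=S^{-1/2}$, so that $\Psi(\sigma)=\sum_j u(j)v(\sigma_j)$. This gives $\E\Psi(\X)=n\bar u\bar v=-S/n$, which is the desired main term. The variance satisfies $\Var\Psi(\X)=O(\smax\tmax/n)$, and since $S/n\le\smax$ and $S/n\le\tmax$ together give $S/n\le\smax^{1/2}\tmax^{1/2}$, this is $O(\smax^{3/2}\tmax^{3/2}/S)$; the remainder $K$, controlled via $\alpha\le\beta^2\smax\tmax=\smax\tmax/S$ and $n=O(S)$, is of strictly smaller order. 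Hence $\E e^{\Psi(\X)}=\exp(-S/n+O(\smax^{3/2}\tmax^{3/2}/S))$, and multiplying by $n!$ and the global error factor yields the claimed formula.

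The main obstacle is the uniform bookkeeping in the second step: one must confirm that every $\sigma$-dependent contribution of Theorem~\ref{thm:bip-avoid} apart from the linear term $-F(\sigma)/S$ collapses into the single error term $O(\smax^{3/2}\tmax^{3/2}/S)$, so that the surviving dependence on $\sigma$ is linear in $F(\sigma)$ and thus amenable to Lemma~\ref{GIMlemma}. The two enabling estimates are the pointwise bound $F(\sigma)\le\smax^{1/2}\tmax^{1/2}S$ and the inequality $S/n\le\smax^{1/2}\tmax^{1/2}$; with these in hand the computation mirrors that of Theorem~\ref{permanent}, with the simplification that we count avoided rather than forced transversals, so no diagonal correction (and hence no subtraction of $\jvec$) is required.
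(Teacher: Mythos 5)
Your proposal is correct and follows essentially the same route as the paper: complementation reduces the permanent to a sum over transversals of zero-avoidance probabilities, the uniform bound $F(\sigma)\le\smax^{1/2}\tmax^{1/2}S$ collapses all non-linear terms into the error, and Lemma~\ref{GIMlemma} averages $e^{-F(\sigma)/S}$ over permutations to produce $-S/n$. The only cosmetic difference is that you invoke Theorem~\ref{thm:bip-avoid} directly for each matching $X_\sigma$ (with a rescaled choice of $u,v$), whereas the paper routes through Corollary~\ref{loopfreethm} with the permuted degree sequence $\tvec^\sigma$; the estimates are identical.
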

\begin{proof}
   A transversal of ones in a matrix is a transversal of zeros
   in its 0-1 complement.
   Therefore, using the bound $W\le\smax^{1/2}\tmax^{1/2}S$
   in the error term of Corollary~\ref{loopfreethm},
   the expected permanent is
   \[
      n!\, \exp\biggl(O\biggl(\frac{\smax^{3/2}\tmax^{3/2}}{S}\biggr)\biggr)
        \E e^{\varphi(\X)},
      \text{~where ~}
       \varphi(\sigma)=-\Dfrac{W(\svec,\tvec^\sigma)}{S}
  \]
   and $\X$ is a random permutation.
   Now we can apply Lemma~\ref{GIMlemma} with
   $u(j)=-s_j$ and $v(j)=t_j/S$.
   We find $\E\Psi=-S/n$,
   $\Var\Psi=O(\smax\tmax/n)=O(\smax^{3/2}\tmax^{3/2}/S)$
   and $\alpha\le \smax\tmax/S$.
   Recalling that $S=\Omega(n)$, the proof is complete.
\end{proof}

In the case where most of the entries of the matrix
are equal to 1, a more direct analysis gives the
following.

\begin{thm}\label{complthm2}
  Assume that $S= \sum_{i\in [n]} s_i 
  =\sum_{i\in [n]} t_i = O(n)$.
  Then the permanent of every 0-1 matrix with
  row sums $(n-s_1,\ldots,n-s_n)$ and column sums
  $(n-t_1,\ldots,n-t_n)$ is
  \[
      n!\,\biggl( e^{-S/n} 
         + O\biggl(\frac{(\smax+\tmax)S}{n^2}\biggr)\biggr),
  \]
  where the error term can be taken inside the exponential
  if it is $o(1)$.
\end{thm}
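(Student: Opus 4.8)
The plan is to expand the permanent exactly by inclusion--exclusion over the zero entries and then compare the resulting alternating sum, summand by summand, with the Taylor series of $e^{-S/n}$. Let $A$ be any $0$-$1$ matrix with row sums $n-s_i$ and column sums $n-t_j$, and let $X\subseteq[n]\times[n]$ be its set of zeros. Then $|X|=S$ and $X$ has degree sequence $(\svec,\tvec)$, so $\smax,\tmax$ are the maximum row- and column-degrees of $X$. The permanent counts the permutations $\sigma$ with $(i,\sigma_i)\notin X$ for every $i$, so inclusion--exclusion over the cells of $X$ that $\sigma$ uses gives the rook-polynomial identity $\operatorname{per}(A)=\sum_{k=0}^{n}(-1)^k r_k\,(n-k)!$, where $r_k$ is the number of $k$-subsets of $X$ whose cells pairwise share no row or column. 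Dividing by $n!$, $\operatorname{per}(A)/n!=\sum_{k\ge0}(-1)^k r_k/(n)_k$, and the $k=0,1$ terms already reproduce $1-S/n$ exactly.

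The key manoeuvre is to write $r_k=\binom{S}{k}-b_k$, where $b_k$ is the number of $k$-subsets of $X$ containing at least one \emph{attacking pair} (two cells in a common row or column). This splits the estimate as $\operatorname{per}(A)/n!=D_S-E$, with $D_S=\sum_k(-1)^k\binom{S}{k}/(n)_k$ independent of the structure of $X$ and $E=\sum_k(-1)^k b_k/(n)_k$ carrying all the dependence on $\smax,\tmax$. The total number of attacking pairs is $\tfrac12(S_2+T_2)\le\tfrac12(\smax+\tmax)S$, and choosing such a pair first gives $b_k\le\tfrac12(\smax+\tmax)S\binom{S-2}{k-2}$ for every $k$. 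Using $(n)_k=(n)_2\,(n-2)_{k-2}$ and re-indexing,
\[
|E|\le\sum_{k\ge2}\frac{b_k}{(n)_k}\le\frac{(\smax+\tmax)S}{2\,n(n-1)}\sum_{l\ge0}\frac{\binom{S-2}{l}}{(n-2)_l}=O\!\left(\frac{(\smax+\tmax)S}{n^2}\right),
\]
the last sum being $O(1)$ for $S=O(n)$ (its terms peak near $l=S/n=O(1)$ and then decay faster than geometrically).

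For the structure-free term I would compare $D_S$ with $e^{-S/n}=\sum_k(-1)^k(S/n)^k/k!$ termwise: the ratio of $k$-th summands is $\prod_{j=0}^{k-1}(1-j/S)/(1-j/n)$, which differs from $1$ by $O\bigl(\binom{k}{2}\,|1/S-1/n|\bigr)$ for $k=O(\sqrt n)$, and summing the weighted differences gives $|D_S-e^{-S/n}|=O(S/n^2)$ (the tails of both series beyond $k=\sqrt n$ are super-exponentially small and hence negligible). Adding the two estimates yields $\operatorname{per}(A)/n!=e^{-S/n}+O((\smax+\tmax)S/n^2)$; multiplying by $n!$ proves the formula, and since $e^{S/n}=O(1)$ the additive error may be written as $e^{-S/n}(1+O(\cdot))$, hence absorbed into the exponent, whenever it is $o(1)$.

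The main obstacle is securing the \emph{sharp} error order rather than a cruder one: a naive term-by-term control of $r_k/(n)_k$ would require a two-sided estimate of $r_k$, but the lower bound on $r_k$ becomes vacuous once $k\gtrsim\sqrt{S/(\smax+\tmax)}$, which can be $O(1)$ when $\smax+\tmax$ is large, and the contribution of larger $k$ is then awkward to bound directly. The decomposition $r_k=\binom{S}{k}-b_k$ is exactly what rescues this, since $b_k\ge0$ obeys a clean absolute upper bound for \emph{all} $k$, so $|E|$ is dominated by a single convergent non-alternating series with no smallness hypothesis on $k$, while all cancellation-sensitive behaviour is quarantined in the structure-free sum $D_S$ and dealt with once. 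A secondary nuisance is that $S=O(n)$ permits $S>n$, so the relevant small quantity in the $D_S$ comparison is $|1/S-1/n|\le 1/\min(S,n)$; in either case the bound $S\le Cn$ keeps the summed error at $O(S/n^2)$.
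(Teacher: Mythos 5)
Your argument is correct, and it runs on the same engine as the paper's proof: inclusion--exclusion over the zero cells, the rook-number expansion $\operatorname{per}(A)=n!\sum_k(-1)^k r_k/(n)_k$, and control of the deviation of $r_k$ from the independent count via the attacking-pair bound $\tfrac12(S_2+T_2)\le\tfrac12(\smax+\tmax)S$. The difference is in how the error is organised. The paper sandwiches the ordered count $m_k=k!\,r_k$ between $\bigl(S-\binom k2(\smax+\tmax)\bigr)S^{k-1}$ and $(S)_k$, then uses $(S)_k/(n)_k\le S^k/n^k$ to trap both $m_k/(k!\,(n)_k)$ and $S^k/(k!\,n^k)$ in a common interval of width at most $\binom k2(\smax+\tmax)S^{k-1}/(k!\,n^k)$, so a single summation finishes the job; that trapping step, however, invokes $S<n$, which is not literally implied by the stated hypothesis $S=O(n)$. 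Your two-stage split $r_k=\binom Sk-b_k$, with an absolute bound on $b_k$ and a separate termwise comparison of $D_S=\sum_k(-1)^k\binom Sk/(n)_k$ with $e^{-S/n}$, avoids that restriction and, as you observe, covers $S>n$ using $\lvert 1/S-1/n\rvert\le 1/\min(S,n)$. The price is the extra $D_S$-versus-$e^{-S/n}$ estimate and the convergence of $\sum_l\binom{S-2}{l}/(n-2)_l$, whose behaviour for $l$ near $n$ (where the term ratio can exceed $1$ when $S>n$) you dismiss a little quickly --- the terms there are indeed super-exponentially small, but a sentence of justification is warranted. Both routes land on the same error $O\bigl((\smax+\tmax)S/n^2\bigr)$.
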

\begin{proof}
The theorem is trivial for $S=0$ so assume $S>0$.
The permanent is the number of transversals that
meet no zero, which we estimate by
inclusion-exclusion on the events of meeting a zero.

For $k\ge 0$, let $m_k$ be the number of ways to choose an ordered sequence of $k$ zeros with no two in the same row or column.
The number of transversals that include a
particular set of $k$ zeros is $(n-k)!$ so,
by inclusion-exclusion, the permanent is
\[
  \sum_{k=0}^n \Dfrac{(-1)^k}{k!} (n-k)!\, m_k
  = n!\,\sum_{k=0}^n (-1)^k \frac{m_k}{k!\,(n)_k}.
\]
Each choice of a zero reduces the remaining choices
by at least one and at most $\smax+\tmax$, and so by
induction on $k$ we have that
\[
    \biggl( S - \binom k2(\smax+\tmax)\biggr) S^{k-1}
    \le m_k \le (S)_k
\]
for $0\le k\le n$ and, trivially, also for $k>n$.
Since $S<n$ by assumption,
$(S)_k/(n)_k\le S^k/n^k$ and so
\[
   \frac{m_k}{k!\,n^k}\le \frac{m_k}{k!\,(n)_k}
   \le \frac{S^k}{k!\,n^k}.
\]
Note that this inequality remains true for $k>n$
if we interpret the middle term as 0 in that case.
Consequently, the permanent is
$n!\,\bigl( e^{-S/n} + \varDelta(\smax,\tmax)\bigr)$,
where
\begin{align*}
   \Abs{\varDelta(\smax,\tmax)} \le 
   \sum_{k\ge 2}\frac{S^k-m_k}{k!\,n^k}
     &\le\sum_{k\ge 2}
     \frac{\binom k2(\smax+\tmax)S^{k-1}}{k!\,n^k} \\
     &= \frac{e^{S/n}\,(\smax+\tmax)S}{2n^2},
\end{align*}
which completes the proof since $e^{S/n}=O(1)$
by assumption.
\end{proof}

Denote by $M(n,d)$ the expected permanent of
an $n\times n$ random 0-1 matrix with each row and column sum equal to~$d$.
The combination of our theorems with previous results enables us to estimate $M(n,d)$
for all~$d$.

\begin{thm}\label{regulardet}
 Let $d=d(n)$ satisfy $2\le d\le n$.
   Then the expected permanent
   of an $n\times n$ random 0-1 matrix with all row and column sums
   equal to $d$ is
   \begin{equation}
       \label{eq:regperm}
       \frac{d^{2n}}{\binom{dn}{n}}
       \exp\Bigl( -\dfrac 12 + O(n^{-1/7})\Bigr)
       = \sqrt{\frac{2\pi (d-1)n}{d}}
        \biggl(\frac{(d-1)^{d-1}}{d^{d-2}}\biggr)^{\!n}
        \exp\Bigl( -\dfrac 12 + O(n^{-1/7})\Bigr).
    \end{equation}
\end{thm}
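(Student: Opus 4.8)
The plan is to evaluate $M(n,d)$ by writing it as $n!$ times a transversal probability and then treating three overlapping density ranges, glued at $d\asymp n^{3/7}$ and, by complementation, at $n-d\asymp n^{3/7}$. As in the proof of Theorem~\ref{permanent}, the regular ensemble is invariant under independent row and column permutations, so every transversal is equally likely to consist entirely of ones; summing over the $n!$ transversals gives $M(n,d)=n!\,\Pr[\text{diagonal all ones}]=n!\,R((d-1)\jvec,(d-1)\jvec)/B(d\jvec,d\jvec)$, where the bijection \eqref{bijection} identifies the numerator with the count of zero-diagonal $(d-1)$-regular matrices. I would first record the target in the exponential form $M(n,d)=\frac{d^{2n}}{\binom{dn}{n}}\exp(-\tfrac12+O(n^{-1/7}))$ and dispatch the second equality in \eqref{eq:regperm} separately: it is a pure Stirling computation, since applying Stirling's formula to $\binom{dn}{n}=(dn)!/(n!\,((d-1)n)!)$ turns $\frac{d^{2n}}{\binom{dn}{n}}$ into $\sqrt{2\pi(d-1)n/d}\,\bigl((d-1)^{d-1}/d^{d-2}\bigr)^{n}$.

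For the sparse range $2\le d\le n^{3/7}$ I would invoke Theorem~\ref{permanent} directly with $\svec=\tvec=d\jvec$. There $S=dn$, $\prod_j s_jt_j=d^{2n}$ and $\binom{S}{n}=\binom{dn}{n}$, so the prefactor is already correct and the exponent is $-\frac{S-n}{n}+Q((d-1)\jvec,(d-1)\jvec)-Q(d\jvec,d\jvec)+O(\smax^{3/2}\tmax^{3/2}/S)$. The key cancellation is that $-\frac{S-n}{n}=-(d-1)$ while the leading term $-\tfrac12(d-1)^2$ of $Q$ contributes $-\tfrac12(d-2)^2+\tfrac12(d-1)^2=d-\tfrac32$ to the difference, and every remaining term of $Q$ contributes only $O(d^2/n)$ to the difference (individual terms grow like $d^3/n$ but their differences do not); hence the exponent equals $-(d-1)+(d-\tfrac32)+O(d^2/n)=-\tfrac12+O(d^2/n)$. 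Since $O(\smax^{3/2}\tmax^{3/2}/S)=O(d^2/n)$ is $O(n^{-1/7})$ exactly when $d\le n^{3/7}$, this settles the sparse range. For the dense range $n-n^{3/7}\le d\le n$ I would run the reduction through the complement, using Theorem~\ref{complthm} (and Theorem~\ref{complthm2} when $n-d=O(1)$), which give $M(n,d)=n!\exp(-(n-d)+O((n-d)^2/n))$; I would then verify, again by Stirling, that $\frac{d^{2n}}{\binom{dn}{n}}e^{-1/2}=n!\,e^{-(n-d)}(1+O((n-d)^2/n))$, which holds because differentiating $\log\frac{d^{2n}}{\binom{dn}{n}}$ in $d$ gives $2n/d-n\log\frac{d}{d-1}=1+O((n-d)/n)$ near $d=n$, and the two sides agree at $d=n$.

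This leaves the genuine middle range $n^{3/7}\le d\le n-n^{3/7}$, where both $d$ and $n-d$ tend to infinity and none of the paper's sparse or dense theorems apply; for instance the hypothesis $\deltamax F=o(S^2)$ behind Theorem~\ref{loopprob} already forces $d=o(n^{1/2})$. Here I would feed the reduction $M(n,d)=n!\,R((d-1)\jvec,(d-1)\jvec)/B(d\jvec,d\jvec)$ into the near-regular, wide-density enumeration formulas of Liebenau and Wormald: \cite[Theorem~1.1]{LW2022} supplies $R((d-1)\jvec,(d-1)\jvec)$ and the bipartite enumeration formula of \cite{LW2022} supplies $B(d\jvec,d\jvec)$, both valid throughout this range since $d,n-d\to\infty$. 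Taking the ratio, the main Stirling-type factors telescope and the low-order correction terms combine; I would simplify the outcome to $\frac{d^{2n}}{\binom{dn}{n}}e^{-1/2}$ with the stated error. The overlap with the sparse range around $d=n^{3/7}$, where Theorem~\ref{permanent} and the Liebenau–Wormald formula coexist, confirms that the additive constant is indeed $-\tfrac12$ and fixes the gluing.

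The main obstacle is this middle range: not the existence of a formula there, but showing that the Liebenau–Wormald expressions for two \emph{different} degrees, $d$ and $d-1$, simplify after dividing and multiplying by $n!$ to the single closed form $\frac{d^{2n}}{\binom{dn}{n}}e^{-1/2}$ uniformly, while keeping the error $o(n^{-1/7})$ for all $d\ge n^{3/7}$. The exponent $\tfrac17$ comes entirely from the gluing: the crossover $d\asymp n^{3/7}$ is forced by balancing the sparse error $O(d^2/n)$ of Theorem~\ref{permanent} against the error of the middle-range formula, so that the worst case on either side of the crossover is $O(n^{-1/7})$. Confirming that the three asymptotic expansions agree to the required order in their overlaps, rather than merely sharing a leading term, is where the bulk of the careful (but essentially routine) computation lies.
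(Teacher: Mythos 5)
Your reduction $M(n,d)=n!\,R((d-1)\jvec,(d-1)\jvec)/B(d\jvec,d\jvec)$ is exactly the paper's starting point \eqref{eq:options}, and your treatment of the two ends is sound: the sparse range via Theorem~\ref{permanent} with the cancellation $-(d-1)+(d-\tfrac32)=-\tfrac12$ and error $O(d^2/n)$, and the very dense range via Theorems~\ref{complthm} and~\ref{complthm2}, match what the paper does (the paper cuts these ranges at $n^{1/3}$ and $n-n^{1/3}$ rather than $n^{3/7}$, but your crossover still satisfies the hypothesis $\smax\tmax=o(S^{2/3})$, i.e.\ $d=o(n^{1/2})$, so that choice is not itself a problem).

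The genuine gap is in your middle range. You claim that \cite[Theorem~1.1]{LW2022} supplies both $R$ and $B$ ``throughout this range since $d,n-d\to\infty$'', but that theorem has an upper density restriction as well as a lower one: the paper itself describes it as valid when the degrees are ``not very small or very large'', and correspondingly invokes it only for $n^{1/3}\le d\le 2n/\log n$ (applied to $R(\vec d-\vec 1,\vec d-\vec 1)$) and, using the complementation identity behind the second expression in \eqref{eq:options}, for $n-2n/\log n\le d\le n-n^{1/3}$ (applied to $R(\vec n-\vec d,\vec n-\vec d)$). Neither application reaches, say, $d=n/2$, where both $d$ and $n-d$ exceed $2n/\log n$. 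The paper covers precisely this hole, $2n/\log n\le d\le n-2n/\log n$, with the dense result \cite[Theorem~2.5]{GMbip} (taken with $a=\tfrac13$, $b=\tfrac17$), which directly yields $M(n,d)=n!\,\lambda^n\exp\bigl(\frac{1-\lambda}{2\lambda}+O(n^{-1/7})\bigr)$ for $\lambda=d/n$; your proposal contains no tool for this regime, so as written it does not establish the theorem for $d=\Theta(n)$ with $n-d=\Theta(n)$. A secondary inaccuracy: the exponent $\tfrac17$ does not arise from balancing the sparse error against a crossover at $n^{3/7}$; in the paper it comes from the parameter choices in the applications of \cite{LW2022} ($\kappa=\tfrac1{70}$) and \cite[Theorem~2.5]{GMbip} ($b=\tfrac17$), while the sparse and very dense ends contribute only $O(n^{-1/3})$.
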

\begin{proof}
First, note that both expressions in~\eqref{eq:regperm}
are equal within their error terms.
For a real number $z$, let $\vec z$ denote
the vector $(z,z,\ldots,z)$ with $n$ components.
As
discussed in the proof of Theorem~\ref{complthm}, we have
\begin{equation}\label{eq:options}
 M(n,d) = \frac{R(\vec d-\vec 1,\vec d-\vec 1\,) \,n!}
   {B(\vec d,\vec d\,)}
   = \frac{R(\vec n-\vec d,\vec n-\vec d\,)\,n!}
   {B(\vec d,\vec d\,)},
\end{equation}
where $R$ is defined in Corollary~\ref{loopfreethm}.

The proof of the theorem proceeds in six ranges.
For $2\le d\le n^{1/3}$, the theorem
is a special case of Theorem~\ref{permanent}.
For $n^{1/3}\le d \le 2n/\log n$,
\cite[Theorem\ 1.1]{LW2022} applied to the
central expresssion in~\eqref{eq:options} with $\kappa=\frac1{70}$ gives
\begin{equation}\label{eq:LWvalue}
   M(n,d)=\frac{n!\, d^{2n} \binom{n^2}{nd}}
         { n^{2n} \binom{n(n-1}{n(d-1)}}
         \bigl(1 + O(n^{-1/7})\bigr),
\end{equation}
which matches~\eqref{eq:regperm}.

For $2n/\log n\le d\le n-2n/\log n$, we can
directly apply~\cite[Theorem\ 2.5]{GMbip} with $a=\frac13, b=\frac17$, which gives
\[
   M(n,d) = n!\,\lambda^n
     \exp\biggl(\frac{1-\lambda}{2\lambda}
      + O(n^{-1/7})\biggr),
\]
where $\lambda=d/n$, which matches~\eqref{eq:regperm}.
For $n-2n/\log n\le d\le d-n^{1/3}$, we apply 
\cite[Theorem\ 1.1]{LW2022} again, this time to the final expression in~\eqref{eq:options}.
It gives expression~\eqref{eq:LWvalue} again,
which still matches~\eqref{eq:regperm} despite
the different range of $d$.

Finally, for $n-n^{1/3}\le d\le n-1$ the
theorem is a special case of Theorem~\ref{complthm},
and for $d=n$ the exact value $M(n,n)=n!$
also matches~\eqref{eq:regperm}.
\end{proof}
    
\section{Oriented graphs}\label{s:oriented}

In this section we find an asymptotic formula for the number of
digraphs with degrees $\svec,\tvec$ that have no loops or 2-cycles,
under the stronger assumption that $\smax\tmax=o(S^{1/2})$.
These are commonly known as simple oriented graphs, since
they correspond to simple undirected graphs to which an
orientation has been assigned to each edge.

Recall that in the bipartite graph model, 2-cycles correspond to distinct
indices $i,j$ such that the edges $u_iv_j$ and $u_jv_i$ are both present.
We will use the notation $\tc ij$ to represent the 2-cycle $\{u_iv_j,u_jv_i\}$.

\begin{lemma}\label{2cycbound}
 Suppose $S\to\infty$ and $(\smax+\tmax)^2=o(S)$.
 Define the cutoff
 $N_1=\lceil\max\{\log S,\allowbreak 24W^2/S^2\}\rceil$.
 Then, with probability $1-O(S^{-2})$, a random loop-free
 bipartite graph has fewer than $N_1$ 2\mbox{-}cycles.
\end{lemma}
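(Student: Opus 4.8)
The plan is to run the first-moment argument of Lemma~\ref{initial-bipartite} with sets of 2-cycles in place of sets of edges of $X$, but to bound the probability that a prescribed family of 2-cycles all occur by deleting them \emph{one at a time} rather than simultaneously. Let $Z$ be the number of 2-cycles in a uniformly random loop-free bipartite graph $G$ with degrees $\svec,\tvec$ and put $f=N_1$. Since $\binom Zf$ counts the $f$-subsets of present 2-cycles, $\Pr[Z\ge f]\le\E\binom Zf=\sum_A\Pr[A\subseteq G]$, where $A$ runs over all families of $f$ distinct 2-cycles and $A\subseteq G$ means every edge of every 2-cycle of $A$ lies in $G$. (If $W=0$ then no $\tc ij$ can occur, so we may assume $W\ge1$.) It then suffices to bound $\Pr[A\subseteq G]$ for a fixed $A=\{\tc{i_1}{j_1},\dots,\tc{i_f}{j_f}\}$ and sum.

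For the per-family bound I would apply the chain rule
\[
\Pr[A\subseteq G]=\prod_{k=1}^{f}\Pr\bigl[\tc{i_k}{j_k}\subseteq G \bigm| \tc{i_1}{j_1},\dots,\tc{i_{k-1}}{j_{k-1}}\subseteq G\bigr],
\]
and estimate each factor by a switching performed inside the set of loop-free graphs that already contain the first $k-1$ prescribed 2-cycles. A forward switching deletes the two edges $u_{i_k}v_{j_k},u_{j_k}v_{i_k}$ together with two further edges $u_av_b,u_cv_d$ and reinserts $u_{i_k}v_b,u_av_{j_k},u_{j_k}v_d,u_cv_{i_k}$, under the usual conditions forbidding loops and repeated edges; the reverse operation recreates $\tc{i_k}{j_k}$. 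Exactly as in Lemma~\ref{initial-bipartite}, the reverse count is at most $s_{i_k}t_{j_k}\cdot s_{j_k}t_{i_k}$ (one incident edge at each of the four endpoints $u_{i_k},v_{j_k},u_{j_k},v_{i_k}$), so each factor is at most this reverse count divided by the forward count. Summing over $A$ with the combinatorial identity from the proof of Lemma~\ref{initial-bipartite}, applied to the multiset $\{s_is_jt_it_j : i<j\}$ and using $\sum_{i<j}s_is_jt_it_j\le\frac12 W^2$, would give
\[
\Pr[Z\ge f]\le\frac1{f!}\biggl(\frac{W^2}{2S^2\,(1-o(1))}\biggr)^{\!f}\le\biggl(\frac{eW^2}{2fS^2\,(1-o(1))}\biggr)^{\!f}.
\]
With $f=N_1\ge 24W^2/S^2$ the bracket is at most $e^{-2}$ for large $S$, and since $f\ge\log S$ this is $O(S^{-2})$, which is the assertion.

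The point requiring care — and the reason for removing the 2-cycles singly — is a uniform lower bound $S^2(1-o(1))$ on the forward count. When $u_av_b,u_cv_d$ are chosen we must discard those incident to the four vertices of $\tc{i_k}{j_k}$ or to each other ($O((\smax+\tmax)S)$ choices), those creating a loop or a repeated edge ($O(\smax\tmax S)$ choices), and those coinciding with an edge of one of the fixed 2-cycles $\tc{i_1}{j_1},\dots,\tc{i_{k-1}}{j_{k-1}}$. This last exclusion costs only $O(N_1 S)$, since it is an edge-level constraint ruling out at most $2(k-1)\le 2N_1$ specific edges; had I removed all $f$ 2-cycles at once I would have needed the $2f$ fresh edges to be pairwise vertex-disjoint, costing $O((\smax+\tmax)N_1S)$, and $(\smax+\tmax)N_1$ need not be $o(S)$ under the present hypotheses. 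Now $(\smax+\tmax)^2=o(S)$ yields $\smax+\tmax=o(S^{1/2})$ and $\smax\tmax=o(S)$, while $W^2\le\smax\tmax S^2$ yields $N_1=o(S)$; hence every excluded set has size $o(S^2)$ and the forward count is $S^2(1-o(1))$ uniformly in $k$ and in the choice of the earlier 2-cycles. This uniformity is what legitimises the telescoping product and is the main thing to get right.
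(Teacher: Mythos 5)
Your proof is correct and follows essentially the same route as the paper: a first-moment bound over $f$-sets of 2-cycles, with the per-set probability controlled by removing the prescribed 2-cycles one at a time via a degree-preserving switching whose reverse count is at most $s_{i_k}t_{i_k}s_{j_k}t_{j_k}$ and whose forward count is $(1-o(1))S^2$, then summing using $\sum_{i<j}s_is_jt_it_j\le W^2/2$. The paper organises the telescoping through intermediate classes $\mathcal{H}_k(D)$ (first $k$ prescribed 2-cycles present, the remaining ones absent) rather than your chain of conditional probabilities, but the switching, the counting estimates and the final numerics are the same.
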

\begin{proof}
  Let $q=N_1$.  The probability that there are at least
  $q$ 2-cycles is at most the expected number of sets of $q$ 2-cycles.

  Let $D= \{\tc{i_1}{j_1}, \ldots, \tc{i_q}{j_q}\}$
  be a potential set of 2-cycles.
  Define $K$ to be the set of $2q$ edges of those 2-cycles.
  For $0 \le k \le q$,
  let $\mathcal{H}_k(D)$ be the set of loop-free bipartite graphs for which 2-cycles $\{\tc{i_1}{j_1}, \ldots, \tc{i_k}{j_k}\}$ are present
  and 2-cycles $\{\tc{i_{k+1}}{j_{k+1}}, \ldots, \tc{i_q}{j_q}\}$ are absent.

For a graph in $G\in\mathcal{H}_k(D)$ with $k \ge 1$, choose two distinct
edges $u_a v_b, u_cv_d\notin K$ such that
$u_{i_k}v_b$, $u_bv_{i_k}$,  $u_av_{i_k}$, $u_{i_k}v_a$,
$u_{j_k}v_d$, $u_dv_{j_k}$, $u_cv_{j_k}$ and $u_{j_k}v_c$ are
not edges of~$G$.

Then remove $u_{i_k} v_{j_k}$, $u_{j_k} v_{i_k}$,  $u_a v_b$ and $u_c v_d$ and insert $u_{i_k} v_b$, $u_{j_k}v_d$,  $u_a v_{i_k}$ and $u_c v_{j_k}$.
Since the 2-cycle $\tc{i_k}{j_k}$ is lost and no other 2-cycles
in $\{\tc{i_1}{j_1}, \ldots,\tc{i_q}{j_q}\}$ are
either destroyed or created,
this gives a graph in $\mathcal{H}_{k-1}$.
There are at least $S-2q$ choices of $u_av_b$ that are not
in~$K$.
In at most $2\smax\tmax$ cases
$u_{i_k}v_b$ or $u_av_{i_k}$ are edges, and in at most
$\smax^2+\tmax^2$ cases $u_{i_k}v_a$ or $u_bv_{i_k}$ are edges.
Thus the number of choices is at least $S-2q-(\smax+\tmax)^2$.
The number of choices of $u_cv_d$ has the same bound except
that we must not choose $u_av_b$ again.
Thus, the total number of choices for the switching is at
least $(S-2q-1-(\smax+\tmax)^2)^2$.

For the inverse operation, we only need an upper bound. 
We can choose edges $u_{i_k} v_b$, $u_{j_k} v_d$,  $u_a v_{i_k}$ and $u_c v_{j_k}$ in at most $s_{i_k}t_{i_k}s_{j_k}t_{j_k}$ ways.

Multiplying these ratios together, we find that:
\[
\mathbb{P}(D \subseteq G) \le \frac{|\mathcal{H}_d(D)|}{|\mathcal{H}_0(D)|}
\le \frac{\prod_{ij \in D}s_it_is_j t_j }{(S-2q-1-(\smax+\tmax)^2)^{2q}}.
\]

To sum over $D$, note that $\sum_{\card D=q}\prod_{ij \in D}s_it_i s_j t_j$ is the coefficient of $x^q$ in
$\prod_{i<j} (1 +s_it_i s_j t_j x),$
which is bounded by the coefficient of $x^q$ in $\prod_{i<j} e^{s_it_is_j t_j x} < e^{W^2x/2}$, which is $\frac{W^{2q}}{2^q q!}$.
Also note that $2q+1 +(\smax+\tmax)^2=o(S)$ and $q!>(q/e)^q$.
Since $q\ge 24 W^2/S^2$ and $q\ge\log S$,
the expected number of sets of $q$ 2-cycles is at most
\[
   \biggl(\frac{3W^2}{2qS^2}\biggr)^{\!q}
   \le 8^{-q} \le 8^{-\log S} < S^{-2},
\]
as desired.
\end{proof}

\begin{figure}[htb]
\unitlength=1cm
\centering
\begin{picture}(12,6)
  \put(0.5,0){\includegraphics[scale=0.9]{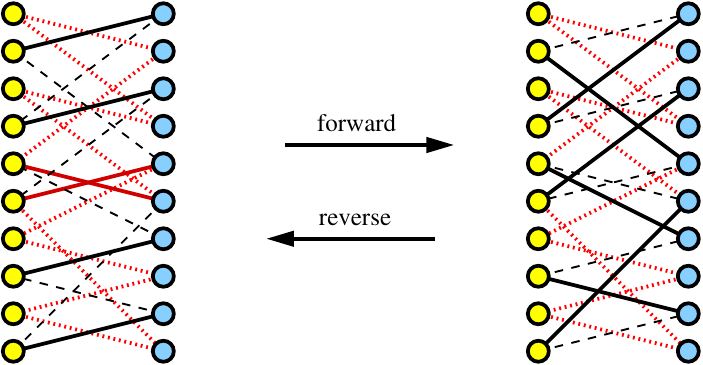}}
  \put(0.0,5.25){$u_a$}  \put(8.0,5.25){$u_a$}
  \put(0.0,4.68){$u_b$}  \put(8.0,4.68){$u_b$}
  \put(0.0,4.12){$u_c$}  \put(8.0,4.12){$u_c$}
  \put(0.0,3.53){$u_d$}  \put(8.0,3.53){$u_d$}
  \put(0.0,2.96){$u_i$}  \put(8.0,2.96){$u_i$}
  \put(0.0,2.39){$u_j$}  \put(8.0,2.39){$u_j$}
  \put(0.0,1.83){$u_e$}  \put(8.0,1.83){$u_e$}
  \put(0.0,1.24){$u_f$}  \put(8.0,1.24){$u_f$}
  \put(0.0,0.67){$u_g$}  \put(8.0,0.67){$u_g$}
  \put(0.0,0.10){$u_h$}  \put(8.0,0.10){$u_h$}
  \put(3.3,5.25){$v_a$}  \put(11.3,5.25){$v_a$}
  \put(3.3,4.68){$v_b$}  \put(11.3,4.68){$v_b$}
  \put(3.3,4.12){$v_c$}  \put(11.3,4.12){$v_c$}
  \put(3.3,3.53){$v_d$}  \put(11.3,3.53){$v_d$}
  \put(3.3,2.96){$v_i$}  \put(11.3,2.96){$v_i$}
  \put(3.3,2.39){$v_j$}  \put(11.3,2.39){$v_j$}
  \put(3.3,1.83){$v_e$}  \put(11.3,1.83){$v_e$}
  \put(3.3,1.24){$v_f$}  \put(11.3,1.24){$v_f$}
  \put(3.3,0.67){$v_g$}  \put(11.3,0.67){$v_g$}
  \put(3.3,0.10){$v_h$}  \put(11.3,0.10){$v_h$}
\end{picture}
   \caption{Switching to remove a 2-cycle.}
   \label{TwoCycSwitching}
\end{figure}

A switching operation that removes one 2-cycle is shown in
Figure~\ref{TwoCycSwitching}.
Define two sets of vertex pairs
$E_1=\{u_bv_a,u_dv_c,u_iv_j,u_jv_i,u_fv_e,u_hv_g\}$ and
$E_2=\{u_jv_c,u_bv_i,u_dv_a,u_iv_e,u_fv_g,\allowbreak u_hv_j\}$.
The 10 indices $a,b,\ldots,i,j$ must be distinct.
For the forward switching, the pairs in $E_1$ are edges and those in $E_2$ are not edges.
The switching consists of removing $E_1$ and
inserting $E_2$.
The reverse switching is the inverse operation.

The choice of the 10 unique indices is restricted so that
no 2-cycles are either destroyed or created,
except that the forward switching destroys the 2-cycle $\tc ij$.
This requires that none of the vertex pairs
$E_3=\{u_av_b,u_cv_d,u_ev_f,u_gv_h,
u_av_d,u_cv_j,\allowbreak u_iv_b,u_jv_h,u_ev_i,u_gv_f\}$,
shown as red dotted lines in the figure, may be edges.

Let $\calT_q$ denote the set of loop-free bipartite graphs with
degrees $\svec,\tvec$ which contain exactly $q$ 2-cycles.

\begin{thm}\label{2cyclemma}
  Suppose $\smax^2+\tmax^2=o(S)$
  and $\smax\tmax(\smax+\tmax)W=o(S^2)$.
  Then the probability that a uniformly random
  loop-free digraph with degree sequence $(\svec,\tvec)$
  has no 2-cycles is
  \[
     \exp\biggl( -\frac{W^2}{2S^2} + O\biggl(
        \frac{\smax\tmax(\smax+\tmax)W}{S^2}\biggr)\biggr).
  \]
\end{thm}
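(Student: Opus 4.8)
The plan is to mimic the switching-plus-summation scheme used for Theorem~\ref{thm:bip-avoid}, with the double-hexagon switching of Figure~\ref{TwoCycSwitching} now playing the role of the single-edge switching. First I would apply Lemma~\ref{2cycbound}: with probability $1-O(S^{-2})$ a random loop-free digraph has fewer than $N_1$ 2-cycles, so the number of loop-free digraphs equals $(1+O(S^{-2}))\sum_{q<N_1}\abs{\calT_q}$ and the probability sought is $(1+O(S^{-2}))\,\abs{\calT_0}\big/\sum_{q<N_1}\abs{\calT_q}$. It then suffices to estimate the ratios $\abs{\calT_q}/\abs{\calT_{q-1}}$, to feed them into Lemma~\ref{summation-lemma} exactly as in Lemma~\ref{forbidden-sum-bipartite}, and to take reciprocals.

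For the ratio I would count, for fixed $q$, the number $N(G)$ of forward switchings removing one 2-cycle from $G\in\calT_q$ and the number $N'(G')$ of reverse switchings creating one 2-cycle in $G'\in\calT_{q-1}$, and use $\sum_{G\in\calT_q}N(G)=\sum_{G'\in\calT_{q-1}}N'(G')$. At leading order a forward switching chooses one of the $q$ present 2-cycles and four auxiliary edges $u_bv_a,u_dv_c,u_fv_e,u_hv_g\in G$, giving $N(G)\sim qS^4$; a reverse switching chooses the vertex pair $\{i,j\}$ of the new 2-cycle together with edges at each of $u_i,v_i,u_j,v_j$ (weight $s_it_i\,s_jt_j$) and two free edges $u_fv_g,u_dv_a\in G'$, giving $N'(G')\sim\bigl(\sum_{i<j}s_it_i\,s_jt_j\bigr)S^2$, which is $\tfrac12W^2S^2$ up to the lower-order quantity $\tfrac12\sum_i(s_it_i)^2\,S^2$. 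Thus the main term of the ratio is
\[
\frac{\abs{\calT_q}}{\abs{\calT_{q-1}}}\approx\frac{W^2}{2qS^2},
\]
so the $\abs{\calT_q}$ behave like a Poisson sequence of mean $W^2/(2S^2)$, which is the source of the claimed exponent. The surviving factor $\tfrac12$ comes precisely from the unordered pair sum $\sum_{i<j}$: because the switching has the order-two automorphism $\phi=(i\,j)(b\,h)(d\,f)(a\,g)(c\,e)$ interchanging the two hexagons, the two orientations of a 2-cycle describe the same operation, and forgetting to quotient by $\phi$ is the one place a factor of two is easily lost.

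To obtain the error term I would refine both counts. In $N(G)$ one discards the choices in which the ten indices fail to be distinct, a double edge is created, or some pair in $E_2\cup E_3$ is already an edge of $G$ (which would create or destroy a second 2-cycle); in $N'(G')$ one likewise discards choices that create a double edge, meet one of the ten red pairs $E_3$, or have $i=j$. Dividing by the leading terms, I would write the ratio as $\tfrac1q\bigl(A_0-(q-1)C_0+O(\cdot)\bigr)$ with $A_0=\tfrac{W^2}{2S^2}+O(\smax\tmax W/S^2)$ (the correction incorporating $\tfrac12\sum_i(s_it_i)^2\le\tfrac12\smax\tmax W$ together with the simplicity and no-extra-2-cycle exclusions) and $C_0=O(W^2/S^3)$ coming from the weak dependence of the free-edge and distinctness counts on the other $q-1$ 2-cycles. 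Each discarded family contributes a relative error built from degree ratios such as $\smax\tmax/W$, $\smax\tmax/S$ and $(\smax+\tmax)^2/S$. Feeding $A(q),C(q)$ into Lemma~\ref{summation-lemma} with $\hat c$ a small constant then yields $\sum_{q<N_1}\abs{\calT_q}=\abs{\calT_0}\exp\bigl(\tfrac{W^2}{2S^2}+O(\smax\tmax(\smax+\tmax)W/S^2)\bigr)$, where the elementary bounds $W\le\min(\smax,\tmax)\,S$ and $2(\smax\tmax)^{1/2}\le\smax+\tmax$ absorb every such relative error — as well as $A_2C_1$, $A_2C_1^2$ and the tail $(2e\hat c)^{N_1}\le S^{-2}$ — into the stated additive error. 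Taking reciprocals then gives the theorem.

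The main obstacle is the bookkeeping in these refined counts: the switching touches ten vertices and forbids the ten red pairs $E_3$, so one must verify that each of the many ways of violating distinctness, simplicity, or the no-extra-2-cycle condition really contributes only at the claimed order, uniformly for $q<N_1$. A point that needs care is that several of the $E_3$ conditions constrain a vertex $u_a$ or $v_a$ carrying the \emph{same} index as a vertex already placed on the opposite side, so the relative cost of these exclusions is $(\smax+\tmax)^2/S$ rather than $(\smax+\tmax)/S$; this is exactly what the factor $\smax+\tmax$ in the error term accommodates. The one genuinely delicate point of principle — as opposed to routine estimation — is correctly quotienting by the automorphism $\phi$ so as to pin down the constant $\tfrac12$; everything else is a transcription of the argument in Lemma~\ref{forbidden-sum-bipartite}.
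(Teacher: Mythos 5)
Your proposal follows essentially the same route as the paper: Lemma~\ref{2cycbound} to truncate at $N_1$ 2-cycles, the double-hexagon switching with the factor $\tfrac12$ arising from the unordered choice of $\{i,j\}$, a ratio of the form $\tfrac1q(A(q)-(q-1)C(q))$ fed into Lemma~\ref{summation-lemma}, and absorption of the error terms via $W\le\min\{\smax,\tmax\}S$. The details you defer (the exclusion bookkeeping over the ten indices and the pairs in $E_2\cup E_3$) are exactly the content of the paper's proof, and your leading terms and error analysis match it.
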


\begin{proof}
The theorem is trivially true when $W=0$, so assume
$W>0$.

Suppose $1\le q\le N_1$ and let $G\in\calT_q$.
We can choose the 2-cycle $\tc ij$ in $q$ ways.
As we choose each of the other four edges, there are $S-2q+O(\smax+\tmax)$
choices that do not lie in a 2-cycle and are not adjacent to a previously-chosen edge.
We can then bound the other forbidden cases as we choose each edge in the following order:
\begin{itemize}\itemsep=0pt
\item[$u_bv_a$:] $O(\smax\tmax)$ for $u_bv_i\in G$,
  $O(\smax^2)$ for $u_iv_b\in G$;
\item[$u_dv_c$:] $O(\smax\tmax)$ for $u_jv_c\in G$ or $u_dv_a\in G$,
  $O(\smax^2)$ for $u_av_d\in G$,\\ $O(\tmax^2)$ for $u_cv_j\in G$;
\item[$u_fv_e$:] $O(\smax\tmax)$ for $u_iv_e\in G$, $O(\tmax^2)$ for $u_ev_i\in G$;
\item[$u_hv_g$:] $O(\smax\tmax)$ for $u_hv_j\in G$ or $u_fv_g\in G$,
  $O(s^2)$ for $u_jv_h\in G$,\\ $O(\tmax^2)$ for $u_gv_f\in G$.
\end{itemize}
Consequently, the number of forward switchings is
\[
   N_F = q \bigl(S - 2q +O(\smax^2+\tmax^2)\bigr)^4.
\]

Next, suppose $1\le q\le N_1$ and let
$G'\in\calT_{q-1}$.
There are $W$ ways to choose $i$ and edges
$u_iv_e$ and $u_bv_i$, except for $O((q-1)(\smax+\tmax))$
of those choices for which $\tc ib$ or $\tc ie$ is a 2-cycle.

For choosing $j$ and edges $u_jv_c$ and $u_hv_j$,
we start with $W$ choices and subtract 
$O(\smax\tmax)$ for $j=i$
and $O(\smax\tmax(\smax+\tmax))$ that would give a forbidden edge $u_iv_j$ or $u_jv_i$, 
and $O(\smax^2+\tmax^2)$ where $c$ or $h$ is a 
previously-chosen index.
Since the actual number of choices at this point
in the analysis cannot be negative,
we can write
it as $\max\{0,W+O(\smax\tmax(\smax+\tmax))\}$.
In addition, $O((q-1)(\smax+\tmax))$ choices lie
in 2-cycles $\tc jh$ or $\tc jc$.

At this stage we divide by 2 because we could
have chosen $i$ and $j$ in the other order.
So we have that the number of choices of $\{i,j\}$
and their incident edges in $E_2$ is
$\frac12 W_1(q)W_2(q)$, where
\begin{align*}
  W_1(q) &= W+O((q-1)(\smax+\tmax)) \\
  W_2(q) &= \max\bigl\{0,W+O(\smax\tmax(\smax+\tmax))\bigr\}
          +O((q-1)(\smax+\tmax)).
\end{align*}
Now we can choose the remaining two edges.
that do not belong
to 2-cycles and do not use a previously-chosen index.  Other exclusions are:
\begin{itemize}\itemsep=0pt
\item[$u_dv_a$:] $O(\smax\tmax)$ for $u_bv_a\in G'$ or $u_dv_c\in G'$,
  $O(\smax^2)$ for $u_cv_d\in G'$, $O(\tmax^2)$ for $u_av_b\in G'$;
\item[$u_fv_g$:]  $O(\smax\tmax)$ for $u_fv_e\in G'$ or $u_hv_g\in G'$, 
  $O(\smax^2)$ for $u_ev_f\in G'$, $O(\tmax^2)$ for $u_gv_h\in G'$.
\end{itemize} 
In summary, the number of reverse switchings is
\[
   N_R = \dfrac12 W_1(q) W_2(q) \bigl(S - 2(q-1) +O(\smax^2+\tmax^2)\bigr)^2.
\]

When $\calT_{q-1}\ne\emptyset$, we can write
\[
   \abs{\calT_q}=(N_R/N_F)\abs{\calT_{q-1}}
   = \Dfrac{1}{q}\, \abs{\calT_{q-1}}
   \,\bigl(A(q) - (q-1)C(q)\bigr),
\]
where
\begin{align*}
   A(q) &= \frac{W \max\{0,W+O(\smax\tmax(\smax+\tmax))\}}{2S^2} \biggl(1+O\biggl(\frac{\smax^2+\tmax^2}{S}\biggr)\biggr) \\[0.5ex]
   C(q) &= O\biggl( \frac{(\smax+\tmax)W
    + (\smax\tmax+N_1)(\smax^2+\tmax^2)}{S^2}
    \biggr).
\end{align*}

When $\calT_{q-1}=\emptyset$, we can choose
$A(q)$ and $C(q)$ arbitrarily, so we choose
$A(q)=W^2/(2S^2)$ and $C(q)=0$.

Now we apply Lemma~\ref{summation-lemma}.
We have $A(q)\ge 0$ and $A(q)-(q-1)C(q)\ge 0$ by
their definitions.

In the value of $A(q)$, note that we always have
$\max\{0,W+O(\smax\tmax(\smax+\tmax))\}
= W + O(\smax\tmax(\smax+\tmax))$, since
$W=O(\smax\tmax(\smax+\tmax))$ if the value of
the maximum is~0.
Also, since $W\le \min\{\smax,\tmax\} S$,
we have
\[
 \frac{W^2(\smax^2+\tmax^2)}{S^3}
\le \frac{\min\{\smax,\tmax\}(\smax^2+\tmax^2)W}{S^2}
=O\biggl(\frac{\smax\tmax (\smax+\tmax)W}{S^2}\biggr).
\]
Therefore, for $q\in[N_1]$,
\[
A(q) = \frac{W^2}{2S^2}+
 O\biggl(\frac{\smax\tmax(\smax+\tmax)W}{S^2}\biggr).
\]
From the definition of $N_1$ and the theorem
assumptions, we infer that
$\abs{A(q)}/N_1\le\frac1{48} + o(1)$.
Also, $C(q)=o(1)$, so we can take $\hat c=1/(2e^3)$
in Lemma~\ref{summation-lemma}.

Next we check that $A(q)C(q)=O\bigl(\smax\tmax
(\smax+\tmax)W/S^2\bigr)$.
Since $C(q)=o(1)$, it suffices to show that
$W C(q)=O\bigl(\smax\tmax(\smax+\tmax)\bigr)$.
This follows from $\smax\tmax+N_1=o(S)$ and
$W\le \min\{\smax,\tmax\} S$.
The same bound holds for $A(q)C(q)^2$ since $C(q)=o(1)$.

Applying Lemma~\ref{summation-lemma} and
Lemma~\ref{2cycbound},
\begin{align*}
   \Dfrac{1}{\abs{\calT_0}}\sum_{q\ge 0}\,
   \abs{\calT_q} &= 
   \Dfrac{1+O(S^{-2})}{\abs{\calT_0}}\sum_{q=0}^{N_1}\,
   \abs{\calT_q} \\
   &= \exp\biggl( \frac{W^2}{2S^2} + O\biggl(
        \frac{\smax\tmax(\smax+\tmax)W}{S^2}\biggr)\biggr)
        + O(S^{-2}).
\end{align*}
Since the value of the left side is at least 1, we can
move the added term to inside the exponential, where it
is covered by the other error term.
This completes the proof.
\end{proof}

Combining Theorem~\ref{2cyclemma} with Corollary~\ref{loopfreethm} leads to the following.

\begin{cor}\label{2cycans}
 Suppose $\smax\tmax=o(S^{2/3})$, $\smax^2+\tmax^2=o(S)$,
 and $\smax\tmax(\smax+\tmax)W=o(S^2)$.
  Then the number of oriented graphs with
  degrees $(\svec,\tvec)$ is
  \[
      \frac{S!}{\prod_{i\in [n]} (s_i!\,t_i!)}
      \exp\biggl( Q(\svec,\tvec) - \frac{W}{S}
      - \frac{W^2}{2S^2}
      + O\biggl(\frac{\smax^3\tmax^3}{S^2} +
      \frac{\smax\tmax(\smax+\tmax)W}{S^2}\biggr)\biggr).
  \]
  In particular, for $d=o(n^{1/3})$, the number of regular oriented graphs
  of in-degree and out-degree $d$ is
  \[
    \frac{(dn)!}{(d!)^{2n}}
    \exp\biggl( -\frac{2d^2+1}{2} + O\biggl(\frac{d^3}{n}\biggr)\biggr).
  \]
\end{cor}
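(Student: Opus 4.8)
The plan is to obtain the number of oriented graphs as a product of two quantities that have already been estimated. An oriented graph with degrees $(\svec,\tvec)$ is precisely a loop-free digraph with these degrees that contains no 2-cycle, so the number of such graphs equals $R(\svec,\tvec)$ multiplied by the probability that a uniformly random loop-free digraph with degrees $(\svec,\tvec)$ has no 2-cycle. Corollary~\ref{loopfreethm} furnishes the first factor and Theorem~\ref{2cyclemma} the second, so the general formula follows on multiplying the two expressions.

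Before multiplying I would check that the hypotheses of both results hold. The assumption $\smax\tmax=o(S^{2/3})$ (with $S\to\infty$, which the hypotheses force) is exactly what Corollary~\ref{loopfreethm} requires, while $\smax^2+\tmax^2=o(S)$ and $\smax\tmax(\smax+\tmax)W=o(S^2)$ are the hypotheses of Theorem~\ref{2cyclemma}. The main term then assembles as $Q(\svec,\tvec)-W/S-W^2/(2S^2)$. For the error, I would note that the term $O(\smax\tmax W/S^2)$ coming from Corollary~\ref{loopfreethm} is dominated by the term $O(\smax\tmax(\smax+\tmax)W/S^2)$ coming from Theorem~\ref{2cyclemma}, since $\smax+\tmax\ge 1$; hence the two combine to give $O(\smax^3\tmax^3/S^2+\smax\tmax(\smax+\tmax)W/S^2)$, as claimed.

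For the regular specialisation I would set $s_i=t_i=d$ for all $i$, so that $S=dn$, $W=nd^2$ and $S_b=T_b=n(d)_b$. Substituting into the three hypotheses shows that the binding one is $\smax\tmax(\smax+\tmax)W=o(S^2)$, which reduces to $d^3=o(n)$, i.e.\ $d=o(n^{1/3})$; the other two are then implied. Evaluating the exponent is a direct substitution: of the six summands of $Q(\svec,\tvec)$, only the first, $-S_2T_2/(2S^2)$, survives as a main term, equal to $-(d-1)^2/2$, while each of the remaining five is $O(d^3/n)$ and is absorbed into the error. Adding $-W/S=-d$ and $-W^2/(2S^2)=-d^2/2$ gives the exponent
\[
-\frac{(d-1)^2}{2}-d-\frac{d^2}{2}+O(d^3/n)=-\frac{2d^2+1}{2}+O(d^3/n),
\]
and the prefactor $S!/\prod_i(s_i!\,t_i!)$ becomes $(dn)!/(d!)^{2n}$.

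The only care required, none of it deep, is the bookkeeping in this last expansion. I would verify term by term that each summand of $Q$ beyond the first is $O(d^3/n)$, and I would confirm that the general error term specialises correctly: here $\smax^3\tmax^3/S^2=d^4/n^2$ and $\smax\tmax(\smax+\tmax)W/S^2=\Theta(d^3/n)$, and since $d=o(n^{1/3})$ the former is $o(d^3/n)$, so the combined error is $O(d^3/n)$. I anticipate no genuine obstacle; the result is immediate once the two cited estimates are multiplied and the regular-case arithmetic is carried out.
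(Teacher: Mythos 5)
Your proposal is correct and follows exactly the paper's route: the paper derives the corollary by combining Corollary~\ref{loopfreethm} with Theorem~\ref{2cyclemma}, and your error-term bookkeeping and regular-case substitution check out.
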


\subsection{Orientations of undirected graphs}

Let $\dvec=(d_1,\ldots,d_n)$ be a graphical degree sequence.
  Define $\dmax=\max_{i\in [n]} d_i$,
  $D=\sum_{i\in [n]} d_i$ and $D_2=\sum_{i\in [n]} (d_i)_2$.

\begin{thm}[\cite{symmetric}]\label{undirected}
  If $\dmax^4=o(D)$, the number of undirected simple graphs with degree
  sequence $\dvec$ is
  \[
      \frac{D!}{(D/2)!\,2^{D/2}\prod_{i\in [n]} d_i!}
      \exp\biggl( -\frac{D_2}{2D} - \frac{D_2^2}{4D^2}
         + O\biggl(\frac{\dmax^4}{D}\biggr)\biggr).
   \]
\end{thm}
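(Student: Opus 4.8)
The plan is to derive the formula from the configuration (pairing) model together with a switching analysis that mirrors the bipartite arguments of Section~\ref{s:bipartite}. Equip vertex $i$ with $d_i$ half-edges, giving $D$ half-edges in total, and recall that the number of perfect matchings (pairings) of these half-edges is $M:=D!/\bigl((D/2)!\,2^{D/2}\bigr)$. Each pairing projects to a multigraph with degree sequence $\dvec$, and each \emph{simple} graph with degree sequence $\dvec$ arises from exactly $\prod_{i\in [n]} d_i!$ pairings. Hence the number of simple graphs equals
\[
\frac{M}{\prod_{i\in [n]} d_i!}\,\Pr[\text{a uniformly random pairing is simple}],
\]
and the whole content of the theorem is the claim that this probability is $\exp\bigl(-\tfrac{D_2}{2D}-\tfrac{D_2^2}{4D^2}+O(\dmax^4/D)\bigr)$. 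The two exponents are, heuristically, the means of the asymptotically Poisson counts of loops and of double edges in a random pairing: a first-moment calculation gives $\E[\#\text{loops}]=D_2/(2(D-1))$ and $\E[\#\text{double edges}]=\bigl(D_2^2/(4D^2)\bigr)(1+o(1))$, so if these counts behaved like independent Poisson variables the probability of simplicity would be exactly the claimed exponential.

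To turn this heuristic into a proof with the stated error I would use switchings. First, an a priori estimate in the style of Lemma~\ref{initial-bipartite} and Lemma~\ref{2cycbound} shows that, with probability $1-O(D^{-2})$, a random pairing has at most $O(\log D)$ loops, at most $O(\log D)$ double edges, and no edge of multiplicity three or more; this lets me truncate all sums at a cutoff. Next I introduce two switching operations: a loop-removing switching (delete a loop at $v$ together with a disjoint ordinary edge and reconnect the four half-edges into two loop-free, parallel-free edges) and a double-edge-removing switching of the same flavour. Writing $\calC_{\ell,k}$ for the set of pairings with exactly $\ell$ loops, $k$ double edges and no higher multiplicities, a count of forward and reverse switchings yields ratio estimates $|\calC_{\ell,k}|/|\calC_{\ell-1,k}|$ and $|\calC_{\ell,k}|/|\calC_{\ell,k-1}|$ of exactly the shape required by the summation Lemma~\ref{summation-lemma}. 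Applying that lemma in each coordinate (loops first, then double edges) converts the recurrences into the two exponential factors $\exp(-D_2/(2D))$ and $\exp(-D_2^2/(4D^2))$.

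The main obstacle is extracting the \emph{second-order} factor with coefficient $\tfrac14$ while holding the error to $O(\dmax^4/D)$. In the double-edge coordinate the parameter fed to Lemma~\ref{summation-lemma} is $A=\Theta(D_2^2/D^2)=O(\dmax^2)$ (using $D_2<\dmax D$), and the lemma converts a per-step relative error of $O(\dmax^2/D)$ in the switching ratio into an absolute error of order $A\cdot O(\dmax^2/D)=O(\dmax^4/D)$ in the exponent; the loop coordinate is less demanding since there $A=O(\dmax)$. Securing exactly this per-step accuracy is the delicate part: the reverse double-edge switching must be counted with its precise number of available vertex pairs, including the $O\bigl(\sum_i\binom{d_i}{2}^2\bigr)$ correction to $(D_2/2)^2$, and every boundary case—coinciding half-edges, half-edges already lying in a loop or double edge, or reconnections that would create a new defect—must be shown to contribute only $O(\dmax^2/D)$ relatively. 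One must also verify that triple edges and pairings above the cutoff are negligible, and that the cross-terms between the loop and double-edge removals fall inside the same error, after which the product of the two exponential factors, multiplied by $M/\prod_{i\in [n]} d_i!$, yields the claimed formula.
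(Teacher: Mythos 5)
Theorem~\ref{undirected} is not proved in this paper at all: it is quoted directly from McKay~\cite{symmetric} and used only as an ingredient in Theorem~\ref{orients}, so there is no internal proof to compare against. Your outline --- pairing model, a priori truncation, loop-removing and double-edge-removing switchings, and Lemma~\ref{summation-lemma} applied in each coordinate --- is essentially the strategy of that cited source and of the analogous switching arguments in Sections~\ref{s:bipartite} and~\ref{s:oriented} of this paper, and it is sound as a plan. One small correction: the probability that a random pairing contains an edge of multiplicity three or more is only $O(\dmax^4/D)$, not $O(D^{-2})$, so that exclusion must be absorbed into the main error term $O(\dmax^4/D)$ (as a multiplicative $1+O(\dmax^4/D)$ on the ratio of interest) rather than treated as a negligible $O(D^{-2})$ event.
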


Now let $\deltavec=(\delta_1,\ldots,\delta_n)$ be such that
$\dvec/2-\deltavec$ and $\dvec/2+\deltavec$ are non-negative integer
sequences.
Define $\varDelta_2=\sum_{i\in [n]} \delta_i^2$
and $V=\sum_{i\in [n]} \delta_i d_i$.

\begin{thm}\label{orients}
   Consider a uniformly random undirected graph with degree
   sequence~$\dvec$.
   Suppose $\dmax^4=o(D)$ and $\sum_{i\in [n]}\delta_i=0$.
   Then the expected number of orientations with in-degrees
   $\dvec/2-\deltavec$ and out-degrees $\dvec/2+\deltavec$ is
   \[
      \frac{2^{D/2}}{\binom{D}{D/2}}
      \prod_{i\in [n]}\binom{d_i}{d_i/2+\delta_i}
      \exp\biggl( -\Dfrac{3}{4} + \frac{4\varDelta_2}{D}
        - \frac{4\varDelta_2^2}{D^2} 
        + \frac{2V^2}{D^2}
        + O\biggl(\frac{\dmax^4}{D}\biggr)\biggr).
   \]
   In particular, if the entries of $\dvec$ are even, the
   expected number of Eulerian orientations is
      \[
      \frac{2^{D/2}}{\binom{D}{D/2}}
      \prod_{i\in [n]}\binom{d_i}{d_i/2}
      \exp\biggl( -\Dfrac{3}{4} 
        + O\biggl(\frac{\dmax^4}{D}\biggr)\biggr).
   \]
\end{thm}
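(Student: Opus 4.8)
The plan is to recognise the expected number of orientations as a ratio of two counts we have already estimated. Put $\svec=\dvec/2+\deltavec$ (the out-degrees) and $\tvec=\dvec/2-\deltavec$ (the in-degrees), so that $s_i+t_i=d_i$ and $s_i-t_i=2\delta_i$. An orientation of a simple graph $G$ that realises in-degrees $\tvec$ and out-degrees $\svec$ is precisely an oriented graph with degrees $(\svec,\tvec)$ whose underlying graph is $G$, and every oriented graph with degrees $(\svec,\tvec)$ has underlying degree sequence $\dvec$. Hence, writing $U(\dvec)$ for the number of simple undirected graphs with degrees $\dvec$ and $\mathrm{Or}(\svec,\tvec)$ for the number of oriented graphs with degrees $(\svec,\tvec)$, summing the orientation count over all host graphs $G$ shows that the expected number of orientations equals $\mathrm{Or}(\svec,\tvec)/U(\dvec)$. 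The hypothesis $\sum_i\delta_i=0$ guarantees $\sum_i s_i=\sum_i t_i=D/2=:S$, as required by the digraph framework.

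First I would verify the hypotheses. Since $0\le s_i,t_i\le d_i\le\dmax$ we have $\smax,\tmax\le\dmax$ and $W\le\smax T\le\dmax D/2$, so the conditions $\smax\tmax=o(S^{2/3})$, $\smax^2+\tmax^2=o(S)$ and $\smax\tmax(\smax+\tmax)W=o(S^2)$ of Corollary~\ref{2cycans} all reduce to $\dmax^4=o(D)$, which is also the hypothesis of Theorem~\ref{undirected}. Applying both results and dividing, the prefactors combine: using $S=D/2$ and $\prod_i d_i!/(s_i!\,t_i!)=\prod_i\binom{d_i}{d_i/2+\delta_i}$, the ratio of prefactors collapses to $\frac{2^{D/2}}{\binom{D}{D/2}}\prod_i\binom{d_i}{d_i/2+\delta_i}$, exactly as claimed. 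Moreover every error term is $O(\dmax^4/D)$: this is immediate for Theorem~\ref{undirected}, while the error $O(\smax^3\tmax^3/S^2+\smax\tmax(\smax+\tmax)W/S^2)$ of Corollary~\ref{2cycans} is $O(\dmax^6/D^2+\dmax^4/D)=O(\dmax^4/D)$ under the above bounds.

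The heart of the argument is the exponent. I would first re-express the moment sums in terms of $D,D_2,\varDelta_2,V$: from $s_it_i=d_i^2/4-\delta_i^2$ one gets $W=(D_2+D)/4-\varDelta_2$, and from $s_i^2=d_i^2/4+d_i\delta_i+\delta_i^2$ one gets $S_2=(D_2-D)/4+\varDelta_2+V$ and $T_2=(D_2-D)/4+\varDelta_2-V$. The key simplification is an order count: since $D_2,\varDelta_2,V,S_2,T_2=O(\dmax D)$ and $S_3,T_3=O(\dmax^2 D)$, every term of $Q(\svec,\tvec)$ except the leading term $-S_2T_2/(2S^2)$ is $O(\dmax^4/D)$ and hence absorbed into the error. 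I would then combine $-S_2T_2/(2S^2)=-2S_2T_2/D^2$ with the terms $-W/S$ and $-W^2/(2S^2)$ from the oriented-graph count and the correction $+D_2/(2D)+D_2^2/(4D^2)$ from inverting the undirected count, and verify by direct expansion that the pure $(D_2,D)$-quadratic contributions cancel down to the constant $-\tfrac34$, the $(D_2,D)\varDelta_2$ cross terms produce $4\varDelta_2/D$, and the remaining pieces yield $-4\varDelta_2^2/D^2+2V^2/D^2$. The Eulerian corollary is then immediate on setting all $\delta_i=0$, so that $\varDelta_2=V=0$.

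The main obstacle is this final exponent computation: it is routine in spirit but demands careful bookkeeping, both to confirm that only the single term $-S_2T_2/(2S^2)$ of $Q$ escapes the error bound and to track the exact cancellations among the $O(1)$, $O(\dmax)$ and $O(\dmax^2)$ contributions arising from three different asymptotic formulas. No single step is conceptually hard, but an uncancelled term of order $\dmax^2$ or $\dmax$ would be fatal, so the verification of the order estimates and of the algebraic collapse is where the real care is needed.
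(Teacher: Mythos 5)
Your proposal is correct and follows essentially the same route as the paper: the expectation is the ratio of the count in Corollary~\ref{2cycans} to that in Theorem~\ref{undirected}, with the substitutions $S=D/2$, $W=\frac14 D_2+\frac14 D-\varDelta_2$, $S_2,T_2=\frac14 D_2-\frac14 D+\varDelta_2\mp V$, and the observation that all terms of $Q(\svec,\tvec)$ except $-S_2T_2/(2S^2)$ are absorbed by $O(\dmax^4/D)$. Your exponent bookkeeping (the collapse to $-\frac34+4\varDelta_2/D-4\varDelta_2^2/D^2+2V^2/D^2$) and the verification of the sparsity hypotheses check out, and in fact supply details the paper's own proof leaves implicit.
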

\begin{proof}
  Let $\svec=\dvec/2-\deltavec$ and $\tvec=\dvec/2+\deltavec$.
  The expectation is the value in Corollary~\ref{2cycans} divided
  by the value in Theorem~\ref{undirected},
  using $S=\dfrac12 D$, $S_2=\dfrac14 D_2-\dfrac14 D+\varDelta_2-V$,
  $T_2=\dfrac14 D_2-\dfrac14 D+\varDelta_2+V$ and $W=\dfrac14 D_2+\dfrac14 D-\varDelta_2$.
  All the terms of $Q(\svec,\tvec)$ except the first fit into the
  error term.
  Eulerian orientations have $\deltavec=(0,\ldots,0)$,
  so set $\varDelta_2=V=0$.
\end{proof}

If $\EO(G)$ is the number of Eulerian orientations of $G$,
then $\rho(G)=\frac 1n\log \EO(G)$ is known as the
\textit{residual entropy} of~$G$
in statistical physics.
In 1935, Pauling~\cite{pauling} proposed a  heuristic estimate for $\rho(G)$
that was later proved to be a lower bound:
\[
  \hat\rho(G) = -\Dfrac{D}{2n}\log 2 
  + \Dfrac1n\sum_{j\in [n]} \log\binom{d_i}{d_i/2} 
\]
where $\dvec$ is the degree sequence of $G$.
In~\cite{IMZ} it was shown that, under the condition
$\dmax^2=o(n)$, a uniformly random undirected
graph $G$ with degree sequence $\dvec$ has
\[
    \rho(G) = \hat\rho(G) + O\Bigl(\Dfrac{\dmax^2+\log n}{n}\Bigr).
\]
Under the stronger condition
$\dmax^4=o(D)$, Theorem~\ref{orients} sharpens this to
\[
    \rho(G) = \hat\rho(G) +\Dfrac1{2n}\log\Dfrac{\pi D}{2} - \Dfrac3{4n} 
    + O\Bigl(\Dfrac{\dmax^4}{nD}\Bigr).
\]

\nicebreak
\frenchspacing

\end{document}